\definecolor{mycolor1}{rgb}{0.00000,0.44700,0.74100}%
\definecolor{ccgray}{gray}{0.1}
\theoremstyle{plain}
\newtheorem{theorem}{Theorem}[] %o [chapter]
\newtheorem{proposition}[theorem]{Proposition}
\newtheorem{lemma}[theorem]{Lemma}
\theoremstyle{definition}
\newtheorem{example}{Example}
\newtheorem{algo2}{Algorithm}
\theoremstyle{remark}
\newtheorem{remark}{\textbf{Remark}}
\theoremstyle{plain}
\newtheorem{assumption}{Assumption}
\pgfplotsset{compat=1.12}
\pgfplotsset{every tick label/.append style={font=\tiny}}
\DeclareMathOperator{\bK}{\bm{{\mathrm{K}}}}
\newcommand*{\No}{\mathcal{N}} % Operator
\newcommand*{\Mo}{\mathcal{D}} % Operator
\renewcommand{\phi}{\varphi}		
\renewcommand{\theta}{\vartheta}
\renewcommand{\epsilon}{\varepsilon}
\newcommand{\field}[1]{\mathbb{#1}}
\newcommand{\R}{\field{R}}
\newcommand{\Z}{\field{Z}}
\newcommand{\N}{\field{N}}
\newcommand{\C}{\field{C}}
\newcommand{\transpose}{\mathsmaller {\mathrm{T}}}
\title{\LARGE Spectrum-based stability analysis and stabilization \\ of a class of time-periodic time delay systems}
\date{}
\author[1]{{\large Wim Michiels}\thanks{Wim.Michiels@cs.kuleuven.be}}
\author[1]{{\large Luca Fenzi}\thanks{Luca.Fenzi@cs.kuleuven.be}}
\affil[1]{{\footnotesize Department of Computer Science, KU Leuven, Belgium}}
\newcommand*{\ie}{\textit{i.e.}\@\xspace}  % i.d. id est
\begin{document}

\maketitle
\vspace{-0.1\textheight}

% REQUIRED
\begin{abstract}
We develop an eigenvalue-based approach for the stability assessment and stabilization of linear systems with multiple delays and periodic coefficient matrices. Delays and period are assumed commensurate numbers, such that the Floquet multipliers can be characterized as eigenvalues of the monodromy operator and by the solutions of a finite-dimensional non-linear eigenvalue problem, where the evaluation of the characteristic matrix involves solving an initial value problem.
We demonstrate that such a dual interpretation can be exploited in a two-stage approach for computing dominant Floquet multipliers, where global approximation is combined with local corrections. Correspondingly, we also  propose two novel characterizations of left eigenvectors. Finally, from the nonlinear eigenvalue problem formulation, we derive computationally tractable expressions for derivatives of Floquet multipliers with respect to parameters, which are beneficial in the context of stability optimization. Several numerical examples show the efficacy and applicability of the presented results.
\end{abstract}

% REQUIRED
\noindent\textbf{Keywords.} Nonlinear eigenvalue problems; Delay systems; Periodic systems; Stability

\section{Introduction}
Linear time-periodic delay differential equations appear in mathematical models for a variety of dynamical systems including  machining processes such as milling, they describe periodic control strategies such as act-and-wait control in the presence of feedback delays, and they appear in the local stability analysis of periodic solutions of systems governed by nonlinear delay differential equations. We refer to \cite{Szalai2006,Insperger2011,Breda2015} and reference therein for an overview.

In this paper we consider linear time-periodic systems with multiple delays, described by model 
\begin{equation}
\dot{x}(t)=\sum_{j=0}^hA_j(t)x(t-\tau_j),
\label{eq:system}
\end{equation}
where $h\in\N$, state variable $x(t)\in\R^d$, and functions
$
A_j:\ \R\to\R^{d\times d},\ \    t \mapsto A_j(t)
$
are smooth and $T$-periodic, for $j=0,\ldots,h$. The period satisfies $T>0$ and the delays are sorted in increasing order such that 
$
0=\tau_0\leq \tau_1 <\tau_2<\cdots<\tau_h. 
$
Throughout the paper we address the case where the time-delays and period are commensurate.  More precisely, we make the following assumption.
\begin{assumption}  \label{as1} There exist  real number $\Delta>0$, integers $N$ and $n_j$, for $j=1,\ldots,h$ such that the period and delays satisfy
	\begin{equation*}
	T=N\Delta, \quad \tau_j=n_j\Delta,\ \ j=1,\ldots,h.
	\end{equation*}
\end{assumption}
% STABILITY & LITERATURE
The stability of the zero solution of \eqref{eq:system} can be inferred 
from the location of the Floquet multipliers, which are eigenvalues of the monodromy operator. 
As we shall see, under Assumption~\ref{as1} the Floquet multipliers can  alternatively be characterized by the solutions of a finite-dimensional nonlinear eigenvalue problem, where the evaluation of the product of the so-called characteristic matrix with a vector involves the solution of an initial value problem for an ordinary differential equation; see \cite{Voss2013,Guettel2017}  for an overview of properties and methods for generic nonlinear eigenvalue problems, and   \cite{Sieber2011,Szalai2006} and the references therein for characteristic matrices of periodic time-delay systems.  This result generalizes Theorem~2.1 of \cite{Skubachevskii2006} where scalar systems with one delay are considered. We also refer to \cite{Jarlebring2018,Rott2010},  where the one-delay case with $T=\tau_1$ is considered and the nonlinear eigenvalue problem is expressed in terms of  Floquet exponents, rather than Floquet multipliers.

Several methods have been proposed in the literature to approximate Floquet multipliers by discretizing the monodromy operator. 
The semi-discretization approach \cite{Insperger2011} is based on approximating the delays by sawtooth functions, in such a way that the stability of the approximate system can be inferred from the stability a  higher-order differential equation in discrete time.  Another frequently used discretization technique consists of approximating the monodromy operator using (pseudo)spectral collocation, variants  of which have been proposed in \cite{Bueler2007,Butcher2011,Engelborghs2001,Sieber2014} and in \cite[Chapter~6]{Breda2015}. 
  In this paper, a spectral discretization based on spline collocation is adopted, so  that the approximation of the eigenvalues is consistent with solving the boundary value problem, induced by the nonlinear eigenvalue problem formulation.  This results in a standard or polynomial  eigenvalue problem for which both a direct solver, and, for large-scale problems, an Arnoldi type algorithm are proposed.

At the same time, the nonlinear eigenvalue problem formulation or, equivalently,
 the characteristic matrix formulation, allows to compute the Floquet multipliers by applying iterative solvers for systems of nonlinear  equations, see \cite{Jarlebring2018} and the references therein.  As a first main contribution, this leads us to  a novel two-stage approach for efficiently and reliably computing dominant Floquet multipliers. First,  multiple Floquet multiplier approximations are obtained by discretizing the monodromy operator and solving the resulting eigenvalue problem. The advantage lies in the fact that multiple Floquet multiplier approximations are found, yielding a more than local picture about the distribution of the spectrum. Subsequently, the eigenvalues and eigenvectors are used 
 as starting values for Broyden's method applied to the nonlinear eigenvalue problem formulation.  The latter allows us to significantly increase accuracy of individual Floquet multiplier approximations and at the same time  remove spurious eigenvalues.  Conceptually similar to the stability assessment of equilibria of autonomous linear time-delay system  in \cite{Michiels2011} and in the package DDE-BIFTOOL \cite{Sieber2014}, this two-stage approach of global approximation and local correction  exploits the dual interpretation of the Floquet multipliers, arising from either an infinite-dimensional linear or a finite-dimensional nonlinear eigenvalue problem.
Finally, it should be pointed out that the approach of locally solving a nonlinear system of equations derived from the characteristic matrix has also been successfully used to compute bifurcations of periodic solutions of parametrized nonlinear time-periodic systems in article \cite{Szalai2006}, which is from a methodological point of view closely related to the software package PDDE-CONT.

% TRANSPOSED
As a second main contribution, we provide a dual characterization of left eigenvectors of the characteristic matrix, in terms of right eigenvectors of a  ``transposed'' nonlinear eigenvalue problem, and in terms of right eigenfunctions of the monodromy operator corresponding to a ``transposed''  periodic time-delay system of \eqref{eq:system},
whose construction involves both taking the transpose and a linear transformation of the argument of coefficient matrices $A_j(t)$, $j=0,\ldots,h$.  This allows us to trivially  extend the presented methods for computing right eigenvectors to left eigenvectors. It also avoids the construction of the full characteristic matrix, in order to compute a left eigenvector corresponding to an already computed  Floquet multiplier.

% SENSITIVITY & STABILITY OPTIMIZATION
As a third main contribution, we show  that the nonlinear eigenvalue problem formulation allows to characterize and compute  derivatives of  simple Floquet multipliers  with respect to parameters, by solving  a related variational equation.  
This approach has potential to the initialization of the deflated Broyden's method in \cite{Jarlebring2018}.   As we shall illustrate, it is particularly beneficial in the context of stability optimization and for the design of  stabilizing controllers, as it can be employed as basis of an algorithm for minimizing the spectral radius of the monodromy operator. 
Existing approaches include  \cite{Sheng2005,Nazari2014}, where the monodromy operator is discretized  by semi-discretization and  by  spectral collocation, respectively (see also \cite{Niu2015} for pole placement based on a discretization of the monodromy operator by a Runge-Kutta time-stepping scheme). 
The proposed technique improves these methods in two ways.  First, the computation of both objective function and derivatives is at the level of the nonlinear eigenvalue problem, serving for local corrections in the aforementioned two-stage approach, while the existing techniques are based on discretizing the (parametrized) system in a preliminary step. Second, the reliability and efficiency of the optimization process is improved by employing derivatives of the objective function.

\smallskip

% STRUCTURE
The structure of this paper is as follows. In section~\ref{sec:diffeq}  the spectral properties of system \eqref{eq:system} are addressed, focusing on the characterization of Floquet multipliers in terms of a nonlinear eigenvalue problem.  In section~\ref{sec:Stabass} algorithms for computing Floquet multipliers and their right eigenvector are presented, which are tightly linked to the dual interpretation of Floquet multipliers.   Characterizations of left eigenvectors are presented in  section~\ref{sec:left}.  Section~\ref{sec:deFloMustab} is devoted to the sensitivity analysis of the Floquet multipliers, with application to the design of stabilizing controllers. Section~\ref{sec:examples} illustrates the efficacy of the novel approach on numerical experiments. The conclusion are presented in section~\ref{sec:conclusion}.

\smallskip 

\noindent\textbf{Notations.\ } We denote by $\otimes$, the Kronecker product, by  $ k \bmod_N$,  the remainder after the division of $k$ by $N$; by  $\lfloor \nicefrac{k}{N} \rfloor$ the rounding to the largest integer, which does not exceed   $\nicefrac{k}{N}$; and by $\lceil \nicefrac{k}{N} \rceil$ the rounding to the smallest integer which is not less than $\nicefrac{k}{N}$.  The transpose and the  complex conjugate transpose of $p$ are respectively denoted by $p^\transpose$, and  $p^*$. $\bar{\nu}$ denotes the complex conjugate of $\nu\in\C$.

\section{Spectral properties and stability}\label{sec:diffeq}
In section~\ref{sec:preliminary} we briefly review, based on \cite[Chapter~8]{Hale1977}, the Floquet theory for linear periodic delay differential equations, which infers  the stability properties of system \eqref{eq:system} from the solutions of an infinite-dimensional linear eigenvalue problem corresponding to the monodromy operator. In section~\ref{sec:nleig} we show that the eigenvalues of this operator can alternatively be obtained from a finite-dimensional nonlinear eigenvalue problem.

\subsection{Preliminaries: the infinite-dimensional linear eigenvalue problem}\label{sec:preliminary}
In order to define a solution of \eqref{eq:system} it is not sufficient to specify $x$ at the starting time yet in general a function segment over a time-interval of length $\tau_h$ is needed.  More precisely, for any initial function $\phi\in X$, where $X=\mathcal{C}([-\tau_h,\,0],\C^d)$ and $t_0\in\R$, the initial value problem
\begin{equation}
\begin{cases}
\dot{x}(t)=\sum_{j=0}^hA_j(t)x(t-\tau_j), &t\in[t_0,\,\infty)\\
x(t)=\phi(t-t_0),\quad &t\in[t_0-\tau_h,\,t_0]
\end{cases} 
\label{eq:ivp}
\end{equation}
has a unique forward solution, which we denote by $x(t;t_0,\phi)$. The corresponding state at time $t$, $t\geq t_0$, i.e. the minimal information to continue the solution is denoted by $x_t(\cdot;t_0,\phi)\in X$, defined by
\[
x_t(\theta;t_0,\phi)=x(t+\theta;t_0,\phi),\ \ \theta\in [-\tau_h,\, 0].
\]
The translation along the solutions is described by the solution operator 
$\mathcal{T}(t_1, t_0): X\to X$, parametrized by $t_0,\, t_1\in\R,\, t_1\geq 0$  and defined through the relation
\[
\mathcal{T}(t_1, t_0)\; \phi= x_{t_0+t_1}(\cdot;t_0,\phi),\ \ \phi\in X.
\]

It can be shown that the spectrum of operator $\mathcal{T}(T,t_0)$, with $T$ the period of functions $A_j$, is
an at most countable compact set in the complex plane with zero as only possible accumulation point. The spectrum is independent of the choice of $t_0$ and all its nonzero elements are eigenvalues.  Operator $\mathcal{T}(T,0)$ is called the monodromy operator and denoted by $\mathscr{U}$ in what follows. Hence, we have
\[
\mathscr{U}\phi=x_T(\cdot;0,\phi), \quad \phi\in X.
\]
The nonzero eigenvalues of the monodromy operator are called Floquet multipliers. By definition they satisfy  the infinite-dimensional linear eigenvalue problem 
\begin{equation}
\mathscr{U}\phi=\mu\; \phi, \qquad \mu\in\C, \ \phi\in X\setminus\{0\}.
\label{eq:infeig}
\end{equation}
As the Floquet multipliers determine the growth/decay of solutions of \eqref{eq:ivp} in time-interval of length $T$ and the system is $T$-periodic, they are important for stability assessment.  In particular, the zero solution of \eqref{eq:system} is asymptotically stable if and only if all Floquet multipliers have modulus strictly smaller than one. This motives the developments of algorithms for computing dominant Floquet multipliers.

A graphical interpretation of the action of the monodromy operator and its associated  eigenvalue problem is given in Figure~\ref{fig:monodromy}.

\begin{figure}[h]
	\begin{subfigure}[h]{0.47\linewidth}
		\centering
	\resizebox{\linewidth}{!}{	
		\begin{tikzpicture}[
		thick,
		>=stealth',
		dot/.style = {
			draw,
			fill = white,
			circle,
			inner sep = 0pt,
			minimum size = 4pt
		}
		]
		
		% AXIS
		\useasboundingbox (-2.6,-0.6) rectangle (5.2,3.2); % values found by trial and error
		\coordinate (O) at (0,0);
		\draw[-] (-2.3,0) -- (5,0) coordinate[label = {}] (x);
		\draw[-] (-2.3,0.15) --  (-2.3,-0.15) coordinate[label = {below: $-\tau_h$}] (xmax);
		\draw[-] (2.7,0.1) --  (2.7,-0.1) coordinate[label = {below: $T-\tau_h$}] (xmax);
		\draw[-] (5,0.15) --  (5,-0.15) coordinate[label = {below: $T$}] (T);
		\draw[->] (0,-0.15) coordinate[label = {below: $0$}] (y) -- (0,2.2) ;
		% TITLE
		\node[draw=mycolor1] at (-1.7,3) {\textcolor{black}{$T>\tau_h$}};
		
		% q
		\draw[samples=100,domain=-2.3:0] plot (\x,{1.1+0.5*sin(2.3*deg(\x))});
		\draw[decorate,decoration={brace,amplitude=5pt},gray] (-2.3,1.7) -- (0,1.7)  node[black,midway,yshift=+0.35cm] (q) {\textcolor{gray}{$\phi$}};
		\draw[-,gray] (-2.3,0.15) --  (-2.3,1.71);

		% Evolution
		\draw[black!30!gray,dashed] plot[smooth] coordinates {(0,1.1) (1,2) (2,1.6) (2.7,{1.5*(1.1+0.5*sin(2.3*deg(2.7-5)))})};
		
		% Uq
		\draw[samples=100,domain=2.7:5] plot (\x,{1.5*(1.1+0.5*sin(2.3*deg(\x-5)))});
		\draw[decorate,decoration={brace,amplitude=5pt},gray] (2.7,2.4) -- (5,2.4)  node[black,midway,yshift=+0.35cm] (uq) {\textcolor{gray}{$\mathscr{U}\phi$}};
		\draw[-,gray] (5,0.15) --  (5,2.41);
		\draw[-,gray] (2.7,0.1) --  (2.7,2.41);
		% monodromy action    
		\draw[->] (q)  to[bend left=10]  (uq)  ;
		\node[] at (1.6,3){\textcolor{black}{$\mathscr{U}\phi=\mu \phi$}};   
		\end{tikzpicture}}
	\end{subfigure}\ 
	\begin{subfigure}[h]{0.47\linewidth}
		\centering
		\resizebox{\linewidth}{!}{	
			\begin{tikzpicture}[
			thick,
			>=stealth',
			dot/.style = {
				draw,
				fill = white,
				circle,
				inner sep = 0pt,
				minimum size = 4pt
			}
			]
			
			% AXIS
			\useasboundingbox (-4.3,-0.6) rectangle (3.3,3.2); % values found by trial and error
			\coordinate (O) at (0,0);
			\draw[-] (-4,0) -- (3,0) coordinate[label = {}] (x);
			\draw[-] (-4,0.15) --  (-4,-0.15) coordinate[label = {below: $-\tau_h$}] (xmax);
			\draw[-] (-1,0.1) --  (-1,-0.1) coordinate[label = {below: \textcolor{black}{$T-\tau_h$}}] (xmax);
			\draw[-] (3,0.15) --  (3,-0.15) coordinate[label = {below: $T$}] (T);
			\draw[->] (0,-0.15) coordinate[label = {below: $0$}] (y) -- (0,2.2) ;
			% TITLE
			\node[draw=mycolor1] at (-3.4,3) {\textcolor{black}{$T<\tau_h$}};

			% q 
			\draw[samples=100,domain=-4:0] plot (\x,{1.1+0.5*sin(2*deg(\x-4))});
			\draw[decorate,decoration={brace,amplitude=5pt},gray] (-4,1.7) -- (0,1.7) 
			node[black,midway,yshift=+0.35cm] (q) {\textcolor{gray}{$\phi$}};
			\draw[-,gray] (-4,0.15) --  (-4,1.71);

			% Uq
			\draw[samples=100,domain=-0:3] plot (\x,{0.9*(1.12+0.5*sin(2*deg(\x-1)))});
			\draw[-,gray] (3,0.15) --  (3,2.21);
			\draw[-,gray] (-1,2.21) --  (-1,0.1);
			\draw[decorate,decoration={brace,amplitude=5pt},gray] (-1,2.2) -- (3,2.2)  node[black,midway,yshift=+0.35cm] (uq) {\textcolor{gray}{$\mathscr{U}\phi$}};

			\draw[->] (q)  to[bend left=10]  (uq)  ;
			\node[] at (-0.9,2.8){\textcolor{black}{$\mathscr{U}\phi=\mu \phi$}};
			\end{tikzpicture}
			}
	\end{subfigure}
	\caption{The monodromy operator translates function $\phi\in\mathcal{C}([-\tau_h,\,0],\R^d)$ along the corresponding solution over a time-interval of length  $T$. In both cases the depicted initial function $\phi$ corresponds to an eigenfunction of $\mathscr{U}$. }\label{fig:monodromy}
\end{figure}
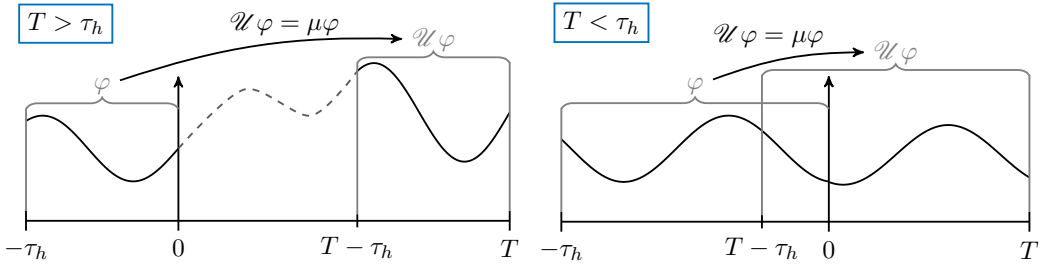	

\subsection{The finite-dimensional nonlinear eigenvalue problem}\label{sec:nleig}

We show that the Floquet multipliers can not only be obtained by solving infinite-dimensional linear eigenvalue problem \eqref{eq:infeig}, but also
from the solutions of a finite-dimensional nonlinear eigenvalue problem.
This dual interpretation plays a major role in the subsequent developments of the paper.
We start with a technical lemma.
\begin{lemma}	\label{th:operator}
	Number $\mu\in\C\setminus\{0\}$ is a Floquet multipliers of system \eqref{eq:system} if and only if  there exists a continuous $\C^{Nd}$-valued function, $\bm{q}(s)$ with  $\bm{q}(0)\neq0$,  which satisfies the boundary value problem
	\begin{equation}
	\begin{cases}
	\dot{\bm{q}}(s)=A(s,\mu)\bm{q}(s),\quad s\in[0,\,1],\\
	\bm{q}(1)=B(\mu)\bm{q}(0),
	\end{cases}\label{eq:difeq}
	\end{equation}
	where
	\[
	B(\mu)=\begin{pmatrix} 0 & I_N \\
			\mu & 0 \end{pmatrix} \otimes I_d,
	\]
	and the differential is described by
	\begin{equation}
	\dot{q}_n(s)=\Delta\sum_{j=0}^hA_j\left((s+n-1)\Delta\right)\mu^{a_{n-n_j}} q_{b_{n-n_j}}(s),\ \ \  n=1,\ldots,N,
	\label{eq:Asmu}
	\end{equation}
	with 
	\[
	a_{k}=\left\lfloor \frac{k-1}{N}\right\rfloor, \quad b_{k}=\left(k-1\right)\bmod_N+1,
	\]
	such that $\bm{q}(s)=\big(q_1^\transpose(s)\,\cdots\,q_N^\transpose(s)\big)^\transpose$.
\end{lemma}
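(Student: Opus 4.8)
The plan is to prove both implications by passing through the forward solution $x(\cdot;0,\phi)$ of the initial value problem \eqref{eq:ivp} and its restriction to the $N$ subintervals $[(n-1)\Delta,\,n\Delta]$ singled out by the commensurability Assumption~\ref{as1}. The bridge between \eqref{eq:infeig} and \eqref{eq:difeq} is the rescaling $q_n(s)=x\big((n-1+s)\Delta\big)$ for $s\in[0,1]$, $n=1,\dots,N$, together with the observation that an eigenfunction forces the solution to be \emph{quasi-periodic}, i.e.\ $x(t+T)=\mu\,x(t)$.

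First I would establish this quasi-periodicity. Given a Floquet multiplier $\mu$ with eigenfunction $\phi$, set $x=x(\cdot;0,\phi)$, so that $x(T+\theta)=\mu\,x(\theta)$ on $[-\tau_h,0]$ by \eqref{eq:infeig}. Defining $y(t):=x(t+T)/\mu$ and using the $T$-periodicity of the $A_j$, one checks that $y$ solves the same equation \eqref{eq:system} and agrees with $x$ on $[-\tau_h,0]$, so uniqueness of forward solutions yields $x(t+T)=\mu\,x(t)$ for all $t\geq-\tau_h$. Differentiating $q_n$ by the chain rule gives $\dot q_n(s)=\Delta\,\dot x\big((n-1+s)\Delta\big)$, and substituting \eqref{eq:system} produces the delayed arguments $x\big((n-1-n_j+s)\Delta\big)$. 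Writing $n-n_j=b_{n-n_j}+a_{n-n_j}N$ and invoking quasi-periodicity in the form $x(t+aN\Delta)=\mu^{a}x(t)$ converts each such term into $\mu^{a_{n-n_j}}q_{b_{n-n_j}}(s)$, which is exactly \eqref{eq:Asmu}; continuity of $x$ gives $q_n(1)=q_{n+1}(0)$ and quasi-periodicity gives $q_N(1)=x(T)=\mu\,x(0)=\mu\,q_1(0)$, i.e.\ the cyclic boundary condition encoded by $B(\mu)$.

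For the converse I would reverse the construction: from $\bm q$ satisfying \eqref{eq:difeq} define $x$ on $[0,T]$ by $x((n-1+s)\Delta)=q_n(s)$ (well defined and continuous since $q_n(1)=q_{n+1}(0)$), extend it to $[-\tau_h,\infty)$ by imposing $x(t+T)=\mu\,x(t)$ (using $\mu\neq0$), and set $\phi:=x|_{[-\tau_h,0]}$. By construction each delayed value again equals $\mu^{a_{n-n_j}}q_{b_{n-n_j}}(s)$, so \eqref{eq:Asmu} states precisely that this $x$ solves \eqref{eq:system} on $[0,T]$ with history $\phi$; by uniqueness it is the forward solution, and the quasi-periodic extension then gives $x_T=\mu\,\phi$, i.e.\ $\mathscr{U}\phi=\mu\phi$. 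For the nonvanishing condition I would note that $\bm q$ solves the \emph{linear homogeneous} system $\dot{\bm q}=A(s,\mu)\bm q$, whence $\bm q(0)\neq0$ is equivalent to $\bm q\not\equiv0$, i.e.\ $x\not\equiv0$ on $[0,T]$; this rules out $\phi=0$, which by uniqueness for \eqref{eq:ivp} would force $x\equiv0$. The same equivalence, read in reverse, supplies $\bm q(0)\neq0$ in the forward implication, since $\phi\neq0$ cannot coexist with $x\equiv0$ on a full period.

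The chain-rule substitution is routine; the step requiring care is the index bookkeeping. The delayed argument $(n-1-n_j+s)\Delta$ may fall outside $[0,T]$ — precisely the regime $\tau_h>T$ ($n_j>N$) — and it must be mapped back into a subinterval while tracking how many periods are crossed. This is where Assumption~\ref{as1} is indispensable: commensurability guarantees the decomposition $n-n_j=b_{n-n_j}+a_{n-n_j}N$ with $b_{n-n_j}\in\{1,\dots,N\}$, so $a_k=\lfloor(k-1)/N\rfloor$ counts the period shifts (hence the power of $\mu$) while $b_k$ selects the correct segment, under the convention that the remainder lies in $\{0,\dots,N-1\}$ even for negative $k$. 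Checking that the quasi-periodic extension is consistent at the grid points $t=k\Delta$, and that the history values it produces coincide with $\phi$, is the only genuinely delicate part; everything else reduces to uniqueness for \eqref{eq:ivp} and for the linear ODE in \eqref{eq:difeq}.
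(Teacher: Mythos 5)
Your proposal is correct and follows essentially the same route as the paper's proof in Appendix~A: identify $q_n(s)=x((n-1+s)\Delta)$ with segments of the solution emanating from the eigenfunction, use the eigenfunction relation to express delayed arguments as $\mu$-power multiples of the segments $q_{b_{n-n_j}}$, and reverse the construction (plus uniqueness for \eqref{eq:ivp} and for the linear ODE) for the converse and for $\bm{q}(0)\neq 0$. The only cosmetic difference is that you first establish the global quasi-periodicity $x(t+T)=\mu\,x(t)$ via a uniqueness argument, whereas the paper works directly with the restricted identity $\phi=\mu^{-1}x_T$ and its iterate \eqref{eq:nshift} on the history segments $q_n$, $n\le 0$.
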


A detailed proof is given in Appendix~\ref{ap1}. To sketch the idea behind one of the implications, consider the simplest case $h=N=n_1=\Delta=1$. Then a solution~$x$, emanating at $t=0$ from an eigenfunction $\phi$ corresponding to Floquet multiplier $\mu$, satisfies
\[
\begin{array}{lll}
\dot x(t;0,\phi) &=& A_0(t) x(t;0,\phi)+ A_1(t) \phi(t)\\
&=& A_0(t) x(t;0,\phi)+ A_1(t) \frac{x(t;0,\phi)}{\mu}
\end{array}
\]
for $t\in [0,\, 1]$, as well as $x(1;0,\phi)=\mu x(0;0,\phi)$. Setting $\bm{q}(t)=x(t;0,\phi)$ yields equations of the form \eqref{eq:difeq}.   A graphical illustration of the generalization is given in Figure~\ref{fig:th}. Variable $s$ in \eqref{eq:difeq} is a local coordinate inside each interval of length $\Delta$. 
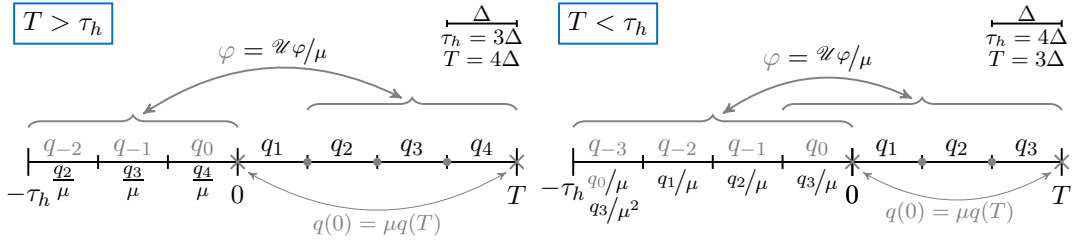
\begin{figure}[h]
	\begin{subfigure}[h]{0.47\linewidth}
		\centering
		\resizebox{\linewidth}{!}{	\begin{tikzpicture}[
			thick,
			>=stealth',
			dot/.style = {
				draw,
				fill = white,
				circle,
				inner sep = 0pt,
				minimum size = 4pt
			}
			]
			
			% AXIS
			\useasboundingbox (-3.2,-1.1) rectangle (4.2,2.2); % values found by trial and error
			\coordinate (O) at (0,0);
			\draw[-] (-3,0) -- (4,0) coordinate[label = {}] (x);
			\draw[-] (-3,0.2) --  (-3,-0.2) coordinate[label = {below: $-\tau_h$}] (xmax);
			\draw[-] (4,0.2) --  (4,-0.2) coordinate[label = {below: $T$}] (T);

			% \draw[->] (0,-0.2) coordinate[label = {below: $0$}] (y) -- (0,4) ;
			% TITLE
			\node[draw=mycolor1] at (-2.5,2) {\textcolor{black}{$T>\tau_h$}};
			
			% LEGEND
			\draw[-] (3,2) --  (4,2) coordinate[label = {}] ();
			\draw[-] (3,2.05) --  (3,1.95) coordinate[label = {}] ();
			\draw[-] (4,2.05) --  (4,1.95) coordinate[label = {}] ();
			\node[]  at (3.5,2.15){{\footnotesize{$\Delta$}}};
			\node[]  at (3.5,1.5){{\footnotesize{$T=4\Delta$}}};
			\node[]  at (3.5,1.8){{\footnotesize{$\tau_h=3\Delta$}}};

			% SMALLER AXIS
			\draw[-] (1,0.1) --  (1,-0.1);
			\draw[-] (2,0.1) --  (2,-0.1);
			\draw[-] (3,0.1) --  (3,-0.1);
			
			\draw[-] (-1,0.1) --  (-1,-0.1);
			\draw[-] (-2,0.1) --  (-2,-0.1);
			
			% ABOVE
			\node[] (q1) at (0.5,0.2){\textcolor{black}{$q_1$}};
			\node[] (q2) at (1.5,0.2){\textcolor{black}{$q_2$}};
			\node[] (q3) at (2.5,0.2){\textcolor{black}{$q_3$}};
			\node[] (q4) at (3.5,0.2){\textcolor{black}{$q_4$}};
			
			\node[] at (-0.5,0.2){\textcolor{gray}{${q_0}$}};
			\node[] at (-1.5,0.2){\textcolor{gray}{${q_{-1}}$}};
			\node[] at (-2.5,0.2){\textcolor{gray}{${q_{-2}}$}};

			% BELOW
			\node[] (q0) at (-0.5,-0.3){\textcolor{black}{$\frac{q_4}{\mu}$}};
			\node[] (q-1) at (-1.5,-0.3){\textcolor{black}{$\frac{q_3}{\mu}$}};
			\node[] (q-2) at (-2.5,-0.3){\textcolor{black}{$\frac{q_2}{\mu}$}};
			
			% ZERO
			\draw[-] (0,0.2) --  (0,-0.2) coordinate[label = {below: $0$}] (T);
			
			% MONODROMY OPERATOR  
			\draw[decorate,decoration={brace,amplitude=5pt},gray] (-3,0.5) -- (0,0.5)  node[black,midway,yshift=+0.1cm] (q) {}; 
			
			\draw[decorate,decoration={brace,amplitude=5pt},gray] (1,0.75) -- (4,0.75)  node[black,midway,yshift=+0.1cm] (uq) {};
			\draw[<->,gray] (q)  to[bend left]  (uq) ;
			\node[] at (0.5,1.6){$\textcolor{gray}{\phi}=\nicefrac{\mathscr{U}\phi}{\mu}$};

			% CONTINUITY  
			\node[circle, fill, minimum size=2,inner sep=1.2pt, outer sep=1pt, gray] at (1,0){};
			\node[circle, fill, minimum size=2,inner sep=1.2pt, outer sep=1pt, gray] at (2,0){};
			\node[circle, fill, minimum size=2,inner sep=1.2pt, outer sep=1pt, gray] at (3,0){};
			
			% BOUNDARY
			\draw[-,gray] (0.1,0.1) --  (-0.1,-0.1);
			\draw[-,gray] (0.1,-0.1) --  (-0.1,0.1);
			\draw[-,gray] (4.1,0.1) --  (3.9,-0.1);
			\draw[-,gray] (4.1,-0.1) --  (3.9,0.1);    
			
			\draw[<->,thin,gray] (0.15,-0.15)  to[bend right]  (3.85,-0.15);
			\node[] at (2,-0.9){{\footnotesize\textcolor{gray}{$q(0)=\mu q(T)$}}};
			\end{tikzpicture}}
	\end{subfigure}\ \ 
	\begin{subfigure}[h]{0.47\linewidth}
		\centering
		\resizebox{\linewidth}{!}{	\begin{tikzpicture}[
			thick,
			>=stealth',
			dot/.style = {
				draw,
				fill = white,
				circle,
				inner sep = 0pt,
				minimum size = 4pt
			}
			]
			
			% AXIS
			\useasboundingbox (-4.2,-1.1) rectangle (3.2,2.2); 
			\coordinate (O) at (0,0);
			\draw[-] (-4,0) -- (3,0) coordinate[label = {}] (x);
			\draw[-] (-4,0.2) --  (-4,-0.2);
			\node[] at (-4.15,-0.4){\textcolor{black}{$-\tau_h$}}; 
			
			\draw[-] (3,0.2) --  (3,-0.2) coordinate[label = {below: $T$}] (T);
			\draw[-] (0,0.2) --  (0,-0.2) coordinate[label = {below: $0$}] (T);

			% TITLE
			\node[draw=mycolor1] at (-3.5,2) {\textcolor{black}{$T<\tau_h$}};
			% LEGEND
			\draw[-] (2,2) --  (3,2) coordinate[label = {}] ();
			\draw[-] (2,2.05) --  (2,1.95) coordinate[label = {}] ();
			\draw[-] (3,2.05) --  (3,1.95) coordinate[label = {}] ();
			\node[]  at (2.5,2.15){{\footnotesize{$\Delta$}}};
			\node[]  at (2.5,1.5){{\footnotesize{$T=3\Delta$}}};
			\node[]  at (2.5,1.8){{\footnotesize{$\tau_h=4\Delta$}}};
			
			% SMALLER AXIS
			\draw[-] (1,0.1) --  (1,-0.1);
			\draw[-] (2,0.1) --  (2,-0.1);

			\draw[-] (-1,0.1) --  (-1,-0.1);
			\draw[-] (-2,0.1) --  (-2,-0.1);
			\draw[-] (-3,0.1) --  (-3,-0.1);
			% ABOVE
			\node[] (q1) at (0.5,0.2){\textcolor{black}{$q_1$}};
			\node[] (q2) at (1.5,0.2){\textcolor{black}{$q_2$}};
			\node[] (q3) at (2.5,0.2){\textcolor{black}{$q_3$}};

			\node[] at (-0.5,0.2){\textcolor{gray}{${q_0}$}};
			\node[] at (-1.5,0.2){\textcolor{gray}{${q_{-1}}$}};
			\node[] at (-2.5,0.2){\textcolor{gray}{${q_{-2}}$}};
			\node[] at (-3.5,0.2){\textcolor{gray}{${q_{-3}}$}};
			
			% BELOW
			\node[] (q0) at (-0.5,-0.3){\textcolor{black}{$\nicefrac{q_3}{\mu}$}};
			\node[] (q-1) at (-1.5,-0.3){\textcolor{black}{$\nicefrac{q_2}{\mu}$}};
			\node[] (q-2) at (-2.5,-0.3){\textcolor{black}{$\nicefrac{q_1}{\mu}$}};
			\node[] (q-3) at (-3.5,-0.3){$\nicefrac{\textcolor{gray}{q_0}}{\mu}$}; 
			\node[] (q-5) at (-3.4,-0.7){$\nicefrac{q_3}{\mu^2}$};

			% ZERO
			\draw[-] (0,0.2) --  (0,-0.2) coordinate[label = {below: $0$}] (T);
			
			% MONODROMY OPERATOR 
			\draw[decorate,decoration={brace,amplitude=5pt},gray] (-4,0.5) -- (0,0.5)  node[black,midway,yshift=+0.1cm] (q) {}; 
			
			\draw[decorate,decoration={brace,amplitude=5pt},gray] (-1,0.75) -- (3,0.75)  node[black,midway,yshift=+0.1cm] (uq) {};
			\draw[<->,gray] (q)  to[bend left]  (uq) ;
			\node[] at (-0.5,1.5){$\textcolor{gray}{\phi}=\nicefrac{\mathscr{U}\phi}{\mu}$};

			% CONTINUITY  
			\node[circle, fill, minimum size=2,inner sep=1.2pt, outer sep=1pt, gray] at (1,0){};
			\node[circle, fill, minimum size=2,inner sep=1.2pt, outer sep=1pt, gray] at (2,0){};
			
			% BOUNDARY
			\draw[-,gray] (0.1,0.1) --  (-0.1,-0.1);
			\draw[-,gray] (0.1,-0.1) --  (-0.1,0.1);
			\draw[-,gray] (3.1,0.1) --  (2.9,-0.1);
			\draw[-,gray] (3.1,-0.1) --  (2.9,0.1);    
			
			\draw[<->,thin,gray] (0.15,-0.15)  to[bend right]  (2.85,-0.15);
			\node[] at (1.4,-0.73){{\footnotesize\textcolor{gray}{$q(0)=\mu q(T)$}}};
			
			\end{tikzpicture}}
	\end{subfigure}
	\caption{ Clarification to the proof  of Lemma~\ref{th:operator} for two examples. Function $\phi$ represents an eigenfunction of the monodromy operator. Functions $q_1,\ldots,q_N$ in \eqref{eq:Asmu} can be interpreted as describing segments of the emanating solution.} 
	\label{fig:th}
\end{figure}

\smallskip

Denoting $\bm{v}=q(0)$, conditions \eqref{eq:difeq} can be rewritten in the form
\begin{equation}\label{eq:NLeig}
\No(\mu)  \bm{v}=0.
\end{equation}
Here, function  $\No:\ \C\rightarrow \C^{Nd\times Nd}$ is analytic in $\C\setminus\{0\}$, and for a given value of $\mu$ and $\bm{v}$ the matrix-vector product $\No(\mu)\bm{v}$ is determined as follows,
\begin{equation}\label{nleig2}
\No(\mu)\bm{v}=\bm{q}(1)-B(\mu)\bm{v},
\end{equation}
where $\bm{q}$ is the solution of initial value problem 
\begin{equation}
\begin{cases}
\dot{\bm{q}}(s)=A(s,\mu)\bm{q}(s), \qquad &s\in[0,\,1],\\
\bm{q}(0)=\bm{v}.
\label{eq:nldifeq}
\end{cases}
\end{equation}

Equation \eqref{eq:NLeig} can be interpreted as a nonlinear eigenvalue problem. The relation with the eigenvalue problem for the monodromy operator is clarified in the following theorem, which generalizes Theorem~2.1 of \cite{Skubachevskii2006} to non-scalar systems with multiple delays.

\begin{theorem}\label{th:eigenpair}
	Let $\hat \mu\in \C\setminus\{0\}$.  If the pair $(\hat \mu,\hat\phi)$ is a solution of the infinite-dimensional linear eigenvalue problem
	\begin{equation}\label{wm:infeig}
	\mathscr{U}\phi=\mu\;\phi,\ \mu\in\C,\ \phi\in X\setminus\{0\},
	\end{equation}
	then $(\hat\mu,\hat{\bm{v}})$ is a solution of the finite-dimensional nonlinear eigenvalue problem
	\begin{equation}\label{wm:nleig}
	\mathcal{N}(\mu)\bm{v}=0,\ \  \mu\in\C,\ \bm{v}\in\C^{Nd},
	\end{equation}	
	where $\hat{\bm{v}}=(v_1^\transpose\,\cdots\,v_N^\transpose)^\transpose$ is determined by 
	\begin{equation}
	v_n=x((n-1)\Delta;0,\hat\phi),\qquad n=1,\ldots,N.\label{eq:vnsol}
	\end{equation}
	
	Conversely,  if the pair $(\hat\mu,\hat{\bm{v}})$ is a solution of \eqref{wm:nleig}, then $(\hat\mu,\hat\phi)$ is a solution of \eqref{wm:infeig},  where 
	\[
	\hat\phi(t) =  {\hat{\mu}}^{\left\lfloor\frac{n-1}{N} \right\rfloor} q_{(n-1)\bmod_N+1}\left(\frac{t}{\Delta}-(n-1)  \right),\ t\in ((n-1)\Delta,\, n\Delta],\, n=-n_h+1,\ldots,0,
	\]
	with $\bm{q}(s)=\big(q_1^\transpose(s)\,\cdots\,q_N^\transpose(s)\big)^\transpose$  the solution of initial value problem \eqref{eq:nldifeq} for $\bm{v}=\hat{\bm{v}}$ and $\mu=\hat{\mu}$.
\end{theorem}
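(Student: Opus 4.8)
The plan is to reduce everything to Lemma~\ref{th:operator}, observing that the nonlinear eigenvalue problem \eqref{wm:nleig} is, by the very definition of $\mathcal{N}$ in \eqref{eq:NLeig}--\eqref{eq:nldifeq}, nothing but the boundary value problem \eqref{eq:difeq} read through the identification $\bm{v}=\bm{q}(0)$: integrating the initial value problem \eqref{eq:nldifeq} from $\bm{q}(0)=\bm{v}$ makes the differential equation in \eqref{eq:difeq} hold automatically, while $\mathcal{N}(\mu)\bm{v}=\bm{q}(1)-B(\mu)\bm{v}=0$ is exactly the boundary relation $\bm{q}(1)=B(\mu)\bm{q}(0)$. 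Hence the equivalence of the spectra is immediate from Lemma~\ref{th:operator}, and the substance of the theorem is to make explicit, in both directions, the dictionary between an eigenfunction $\hat\phi$ of $\mathscr{U}$ and the segment vector $\hat{\bm{v}}$ of \eqref{wm:nleig}.

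The engine for both implications is a quasi-periodicity property of the emanating solution. First I would show that $\mathscr{U}\hat\phi=\hat\mu\,\hat\phi$ forces the solution $x(\cdot):=x(\cdot;0,\hat\phi)$ of \eqref{eq:ivp} to satisfy $x(t+T)=\hat\mu\,x(t)$ for all $t\geq-\tau_h$. Indeed, $t\mapsto x(t+T)/\hat\mu$ again solves the delay equation, by $T$-periodicity of the $A_j$, and agrees with $x$ on $[-\tau_h,0]$ because $x_0=\hat\phi$ and $x_T=\hat\mu\,\hat\phi$; uniqueness of solutions then forces the two to coincide. This single relation is what turns a shift by one period into multiplication by $\hat\mu$, and it is the source of the exponents $a_{n-n_j}$ and reduced indices $b_{n-n_j}$ appearing in \eqref{eq:Asmu}.

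For the forward implication I would set $q_n(s):=x\big((s+n-1)\Delta\big)$ for $s\in[0,1]$ and $n=1,\ldots,N$, so that the $q_n$ are the $N$ consecutive length-$\Delta$ pieces of $x$ on $[0,T]$ and $q_n(0)=x((n-1)\Delta)=v_n$ as in \eqref{eq:vnsol}. Differentiating and inserting the delay equation, each delayed argument $(s+n-1-n_j)\Delta$ either lands in $[0,T]$, directly giving a piece $q_{n-n_j}$, or drops into the history, where the quasi-periodicity relation lifts it back into $[0,T]$ at the price of a factor $\hat\mu^{a_{n-n_j}}$ and a relabelling to $q_{b_{n-n_j}}$; this reproduces \eqref{eq:Asmu}, hence the differential equation of \eqref{eq:difeq}. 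The boundary relation follows by reading off $q_n(1)=x(n\Delta)=q_{n+1}(0)$ for $n<N$ together with $q_N(1)=x(T)=\hat\mu\,x(0)=\hat\mu\,q_1(0)$, which is precisely $\bm{q}(1)=B(\hat\mu)\bm{q}(0)$. Finally $\hat{\bm{v}}\neq0$, since $\bm{q}$ solves the homogeneous linear system \eqref{eq:difeq}, so $\bm{q}(0)=0$ would give $\bm{q}\equiv0$ and hence $\hat\phi\equiv0$.

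For the converse I would run this construction backwards: solve \eqref{eq:nldifeq} from $\bm{q}(0)=\hat{\bm{v}}$, use $\mathcal{N}(\hat\mu)\hat{\bm{v}}=0$ to recover the boundary relation $\bm{q}(1)=B(\hat\mu)\bm{q}(0)$, and define $\hat\phi$ on $(-\tau_h,0]$ by the stated piecewise formula (extended to $t=-\tau_h$ by continuity). A first check is that $\hat\phi$ is continuous across each grid point $t=n\Delta$: the one-sided values $\hat\mu^{a_n}q_{b_n}(1)$ and $\hat\mu^{a_{n+1}}q_{b_{n+1}}(0)$ coincide precisely because the boundary relation supplies $q_b(1)=q_{b+1}(0)$ for $b<N$ and $q_N(1)=\hat\mu\,q_1(0)$. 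The crux is $\mathscr{U}\hat\phi=\hat\mu\,\hat\phi$: I would first show, by uniqueness for \eqref{eq:ivp}, that the solution emanating from $\hat\phi$ has $q_n$ as its $n$-th length-$\Delta$ piece on $[0,T]$ (substituting the $\hat\phi$-formula for the delayed terms reduces the delay equation on each piece to exactly \eqref{eq:Asmu}), from which $\hat\phi\not\equiv0$ also follows, and then verify $x(T+\theta)=\hat\mu\,\hat\phi(\theta)$ for $\theta\in[-\tau_h,0]$ using the elementary identities $a_{N+n}=a_n+1$ and $b_{N+n}=b_n$, the combinatorial incarnation of one period of quasi-periodicity. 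The main difficulty is not conceptual but bookkeeping: confirming that the map $k\mapsto(a_k,b_k)$ faithfully encodes ``shift back by $a_k$ periods into piece $b_k$'' uniformly in $n$ and $j$, and in both regimes $\tau_h<T$ and $\tau_h>T$ of Figure~\ref{fig:th}. I would isolate this index identity once and reuse it verbatim in both directions.
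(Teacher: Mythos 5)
Your proposal is correct and follows essentially the same route as the paper: the paper's proof of this theorem is a one-line deferral to the constructions in the proof of Lemma~\ref{th:operator} (Appendix~A), and your argument reproduces exactly those constructions — the quasi-periodicity relation $x(t+T)=\hat\mu\,x(t)$ encoded by the indices $a_k,b_k$, the segmentation $q_n(s)=x((s+n-1)\Delta)$, the identification of $\mathcal{N}(\mu)\bm{v}=0$ with the boundary condition $\bm{q}(1)=B(\mu)\bm{q}(0)$, and the nontriviality arguments in both directions. Making the uniqueness-based derivation of $x(t+T)=\hat\mu\,x(t)$ and the index identities $a_{N+n}=a_n+1$, $b_{N+n}=b_n$ explicit is a sound way to organize the bookkeeping that the paper leaves implicit.
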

\begin{proof}  The assertions follows from the constructions in the proof of Lemma~\ref{th:operator}.
\end{proof}

The dimension of nonlinear eigenvalue problem \eqref{wm:nleig} does not only depend on the system dimension $d$ but also on the number $N$ of $\Delta$-subintervals of $[0,\,T]$. The latter is minimized if $\Delta$, in Assumption~\ref{as1}, is chosen as the greatest common divisor of $\tau_1$, $\ldots$, $\tau_h$, and $T$. We now present an example where $N=1$.

\begin{example}\label{ex:nleig} We consider system
	\begin{equation}
	\dot{x}(t)=\sum\nolimits_{j=0}^hA_j(t)x(t-j\tau)\label{eq:commensurate}
	\end{equation}
	with $T=\tau$.
	When taking $\Delta=\tau$ we can express $\No(\mu)v=q(1)-\mu\; v$, where $q$ satisfies
	\begin{equation}
	\begin{cases}
	\dot{q}(s)=\tau \sum_{j=0}^h A_j(s\tau)\frac{q(t)}{\mu^j}, \quad s\in[0,\, 1],\\
	q(0)=v.
	\end{cases}
	\label{eq:diffcomm}
	\end{equation}
	
	Moreover, if system \eqref{eq:commensurate} is scalar, \ie $d=1$, $\No(\mu)$ can be explicitly expressed in terms of the coefficients $A_j(t)$ and we can derive a nonlinear equation for the Floquet multipliers. Indeed, if we let 
	\[
	a_j=\int_0^1 \tau A_j(s\tau)\dd s=\int_0^\tau A_j(t)\dd t,
	\]
	then the solution of the differential equations in \eqref{eq:diffcomm} is 
	\[	q(1)= \exp\left(\sum\nolimits_{j=0}^h\mu^{-j}a_j\right)v.\]
	Hence, the Floquet multipliers correspond to the solutions of equation 
	\begin{equation}
	\exp\left(\sum\nolimits_{j=0}^h\mu^{-j}a_j\right)-\mu=0,\label{eq:d1N1}
	\end{equation}		
	which is in accordance  to the characteristic equation for the Floquet exponents as defined in \cite[Section~8.1]{Hale1977}.
\end{example}

\section{Floquet multipliers computation and stability assessment}\label{sec:Stabass}
This section describes two techniques for computing dominant  Floquet multipliers and stability assessment, outlined in Figure~\ref{fig:scheme}. The first one, discussed in section~\ref{sec:discr1st}, approximates the Floquet multipliers by a spline collocation method, furnishing a global view of  the monodromy operator spectrum, and, hence, detecting a guess of the largest Floquet multiplier in modulus. The second technique, analyzed in section~\ref{sec:approxnleig}, computes the Floquet multiplier in a neighborhood of an initial guess by Broyden's method, a local root-finding method, where the accuracy of the computation depends on evaluating the matrix-vector product $\No(\mu)\bm{v}$. Finally, section~\ref{sec:FMcomp} discusses an algorithm for the computation of Floquet multipliers based on a combination of these techniques.
\begin{figure}[h]	
	
	\tikzstyle{block} = [rectangle, text width=4cm, text badly centered]
	
	\begin{tikzpicture}[>=stealth'] 
	
	% 2nd ROW
	\node [block,text width=4cm] (infeq) {{\footnotesize{Infinite-dimensional linear eigenproblem}}\\ $\mathscr{U}\varphi=\mu\varphi$, \quad  $\varphi\in X$};
	
	\node [block,text width=4cm, right=4cm of infeq] (fineq) {{\footnotesize{Finite-dimensional non-linear eigenproblem}}\\ $\mathcal{N}(\mu)\bm{v}=0$, \quad $\bm{v}\in \mathbb{C}^{Nd}$};

	\draw [<->,thick] (infeq)-- (fineq) node[midway,above] (c) {{\footnotesize{Theorem~\ref{th:eigenpair}}}};

	%3th ROW
	
	\node [block,text width=4cm,  below=1.2cm of infeq] (infeig) {{\footnotesize{Matrix eigenvalue problem~\eqref{eq:Minfeig} \\ $\rightarrow$ \emph{global approximation}    }}};

	\draw [->] (infeq)-- (infeig) node[midway,sloped,above] (c) {}; 
	
	\draw [dashed,->] (fineq)-- (infeig) node[midway,sloped,above] (c) {}; 
	
	\node [block,text width=4cm, below=1.2cm of fineq] (finsim) {{\footnotesize{Broyden's method for solving $\mathcal{N}_\delta(\mu)\bm{v}=0$ \\ $\rightarrow$ \emph{local corrrection} }}};
	\draw [->] (fineq)-- (finsim) node[midway,sloped,above] {}; 
	\end{tikzpicture}
	\caption{Approach for computing dominant Floquet multipliers and stability assessment of~\eqref{eq:system}. The upper part illustrates the theoretical results of section~\ref{sec:diffeq} concerning the dual interpretation of  Floquet multipliers in terms of eigenvalue problems. The lower part outlines the Floquet multipliers computation of section~\ref{sec:Stabass}.} 
	\label{fig:scheme}
\end{figure}
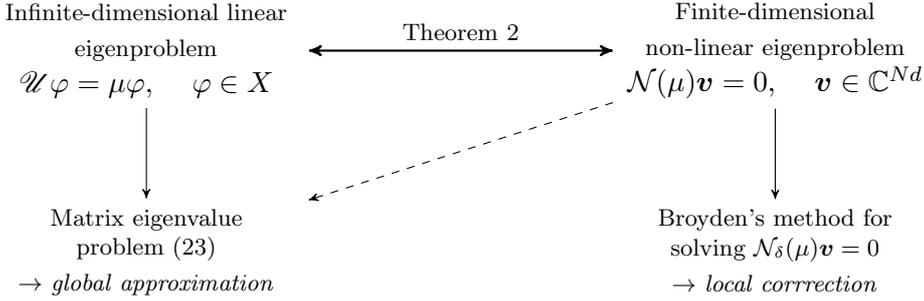

\subsection{Discretization the infinite-dimensional linear eigenvalue problem}\label{sec:discr1st}
We derive a finite-dimensional linear eigenvalue problem, whose eigenvalues approximate Floquet multipliers.  First we outline how approximate solutions of \eqref{eq:system} in the form of a spline can be computed by collocation approach. 
 Second, we show how these approximations induce a matrix approximation of the monodromy operator.
 The derivation is different from~\cite{Breda2015} and from \cite{Butcher2011} for the one-delay case, but the underlying ideas are the same.    Third, in the spirit of Theorem~\ref{th:eigenpair}, we show that the corresponding linear eigenvalue problem can also be obtained from a particular approximation of the nonlinear eigenvalue problem \eqref{wm:nleig}. The latter explains a particular choice of spline, in accordance with the subdivision of the interval $[-\tau_h,\, T]$ in intervals of length $\Delta$.  We conclude with some implementation aspects.

\paragraph*{Discretization of the initial value problem} 
We approximate a solution of \eqref{eq:system}
by a spline, which is piecewise defined on $\Delta$-subintervals by $M$-degree polynomials 
\begin{equation}\label{wm:pol}
\{q_n^M\}_{n=-n_h+1}^N,
\end{equation}
where polynomial $q_n^M$ represents the approximation on the interval $[(n-1)\Delta,\, n\Delta)$. Without loss of generality, we assume that this $\Delta$-interval is scaled and shifted to the interval $[0,\,1)$. 
Given a polynomial basis $\{p_i\}_{i\in\N}$ in the interval $[0,\,1]$, this spline approximation  is uniquely determined by the coefficients $c$ of the polynomials \eqref{wm:pol}, 
\begin{equation*}
c=\begin{pmatrix}
c_{-n_h+1}\\\vdots\\ c_N
\end{pmatrix},\ \quad c_n=\begin{pmatrix}
c_{0,n}\\\vdots\\ c_{M,n}
\end{pmatrix}, \ n=-n_h+1,\ldots, N,
\end{equation*}
such that
\begin{equation}\label{wm:parsol}
q_n^M(s)=\sum_{i=0}^Mc_{i,n}p_i(s), \quad s\in[0,\,1]. 
\end{equation}
We note that the coefficient vector  $(c_{-n_h+1}^\transpose\, \cdots\, c_0^\transpose )^\transpose$ 
can be interpreted as a parametrization of the initial condition. Specifying this vector to the value $c_{\phi}\in\R^{n_h d(M+1)}$ leads us to the condition
\begin{equation}\label{wm:Bc}
Bc=c_\phi,
\end{equation}
where 
\begin{equation}\label{wm:defB} 
B=\begin{pmatrix}I_{n_h} & 0_{n_h\times N} \end{pmatrix}\otimes I_{d(M+1)}.
\end{equation}
We now determine conditions expressing that vector $(c_{1}^\transpose\, \cdots\, c_N^\transpose )^\transpose$ corresponds to the emanating solution. We first require continuity on $[0,\, T]$, which can be expressed as
\begin{equation}\label{wm:cont}
q^M_n(0)=q^M_{n-1}(1),\quad n=1,\ldots, N.
\end{equation}
Second, we impose collocation requirements for differential equation \eqref{eq:system} on a mesh $\{ \xi_m\}_{m=1}^M$ over the interval $[0,\, 1]$, which leads to
\begin{equation}\label{wm:col}
\dot{q}^M_n(\xi_m)=\Delta\sum_{j=0}^hA_j\left((\xi_m+n-1)\Delta\right)q^M_{n-n_j}(\xi_m),\ \ m=1,\ldots, M,\; n=1,\ldots, N.
\end{equation}

Conditions \eqref{wm:cont} and \eqref{wm:col} can be be stated in matrix form as
\begin{equation}\label{wm:Sc}
Sc=0,
\end{equation}
with $S\in\C^{(N\times (n_h+N))d(M+1)}$. 
Finally, the approximate solution with initial condition parametrized by $c_\phi$ can be computed from \eqref{wm:Sc} and \eqref{wm:Bc}, leading to
\begin{equation}\label{wm:spline}
c=\begin{pmatrix}S \\ B \end{pmatrix}^{-1}E^\transpose c_\phi,  
\end{equation}
with
\[
E=\begin{pmatrix}0_{n_h\times N} & I_{n_h} \end{pmatrix}\otimes I_{d(M+1)}.
\]

\paragraph*{Discretization of the infinite-dimensional linear eigenvalue problem} 

The monodromy operator describes the translation along a solution from the interval $[-\tau_h,\, 0]$ to the interval $[T-\tau_h,\, T]$. Considering the previously defined spline approximation of a solution, the action of the monodromy operator can be approximated by the mapping from vector $c_\phi$, which gives rise to discretized solution \eqref{wm:spline}, into vector $c_T=\left(c_{N-n_h+1}^\transpose  \, \cdots \, c_N^\transpose\right)^\transpose$. 

We can express $c_T=E c$, leading to
\[
c_T=E \begin{pmatrix}S \\ B \end{pmatrix}^{-1}E^\transpose\ c_\phi. 
\]
Hence, the discretization of the infinite-dimensional linear eigenvalue problem \eqref{eq:infeig} is 
\begin{equation}\label{eq:Minfeig}
\mathscr{U}_M c_\phi=\mu\; c_\phi,
\end{equation}
where 
\[
\mathscr{U}_M
=
E \begin{pmatrix}S \\ B \end{pmatrix}^{-1}E^\transpose= \begin{pmatrix} 0 &   I_{n_h d(M+1)}  \end{pmatrix}
\begin{pmatrix}S \\  I_{n_h d(M+1)}\ \ 0  \end{pmatrix}^{-1}
\begin{pmatrix} 0\\I_{n_h d(M+1)} \end{pmatrix}
\]
is a matrix approximation of the monodromy operator.

\paragraph*{Interpretation in terms of the finite-dimensional nonlinear eigenvalue problem} 

We can alternative obtain \eqref{eq:Minfeig} from the nonlinear eigenvalue problem \eqref{wm:nleig} by a spectral discretization of boundary value problem \eqref{eq:difeq}. More precisely, approximating $q$ by a polynomial of degree $M$,  
\[
q^M(s)=\begin{pmatrix} 
q_1^M(s)\\
\vdots \\
q_N^M(s)
\end{pmatrix},\ \ s\in [0,\, 1],
\]
imposing collocation requirements for the differential equation on the mesh $\{\xi_m\}_{m=1}^M$ and imposing the condition $q^M(1)=B(\mu) q^M(0)$ leads us to \eqref{wm:Sc}
supplemented with
\begin{equation}\label{wm:extra2}
c_\phi= \mathcal{R}(\mu) c_q,
\end{equation}
where
\[
c_\phi=\begin{pmatrix}c_{-n_h+1} \\  \vdots \\ c_0\end{pmatrix}, \quad  c_q=\begin{pmatrix}c_1\\ \vdots\\ c_N\end{pmatrix},
\]
and
\[
\mathcal{R}(\mu)=\begin{cases}
\begin{pmatrix} 0_{N-n_h\times n_h} &  I_{n_h} \end{pmatrix}\otimes I_{d(M+1)},  & n_h\leq N, \\
\begin{pmatrix}
0\ \  \frac{I_{n_h\bmod_N}}{\mu^{\left\lceil\nicefrac{n_h}{N}\right\rceil}}\\
{\frac{I_{N}}{\mu^{\left\lceil\nicefrac{n_h}{N}\right\rceil-1}}}  \\
{\vdots} \\
{\nicefrac{I_{N}}{\mu}} 
\end{pmatrix}         \otimes I_{d(M+1)}, & n_h>N.
\end{cases}
\]
As spelled out in the proof of Lemma~\ref{th:operator},  differential equation \eqref{eq:difeq} is namely derived from  equation \eqref{eq:nshift} by expressing $x$ in the interval $[-\tau_h,\, 0]$ in terms of $\mu,\, \mu^2,\,\ldots$-fractions of $x$ at positive time-instants, 
where $x$ is a solution emerging from an eigenfunction of $\mathscr{U}$. The two cases in the expression for $\mathcal{R}(\mu)$ correspond  to the situation  depicted in the left, respectively  right pane of Figure~\ref{fig:th}.

Partitioning $S=\left(S_{\phi}\ S_q\right)$, according to the subdivision of $c$ into $c_\phi$ and $c_q$,  allows to rewrite \eqref{wm:Sc}-\eqref{wm:extra2} as
\[
\begin{cases}
S_\phi c_\phi+S_q c_q=0,\\
c_\phi= \mathcal{R}(\mu)  c_q,
\end{cases}
\]
which brings us to the polynomial eigenvalue problem
\begin{equation}\label{eq:Mfineig}
\mu^{\left\lceil\nicefrac{n_h}{N}\right\rceil}\left(S_\phi \mathcal{R}(\mu)+S_q\right)c_q=0.
\end{equation}
To establish a connection between this eigenvalue problem and \eqref{eq:Minfeig}, we note that \eqref{wm:extra2} is equivalent to
\[
B\; c=\frac{1}{\mu} E\; c.
\] 
Conditions  \eqref{wm:Sc}-\eqref{wm:extra2} can then be
rewritten  as
$
\mu S c=0$ and $\mu B c=E c$, which brings us to the generalized eigenvalue problem
\begin{equation}\label{wm:pencil}
\left(\mu \begin{pmatrix}S \\ I_{n_h d(M+1)}\ 0 \end{pmatrix} -\begin{pmatrix} 0_{Nd(M+1)} & 0\\ 0 & I_{n_hd(M+1)} \end{pmatrix}\right)\; c=0.
\end{equation}
From Sylvester's determinant identity it follows that the sets of nonzero eigenvalues of~\eqref{wm:pencil} and of monodromy matrix approximation  $\mathscr{U}_M$ are the same.

\smallskip
In conclusion, the nonzero eigenvalues of  matrix $\mathscr{U}_M$, generalized eigenvalue problem \eqref{wm:pencil},  and polynomial eigenvalue problem \eqref{eq:Mfineig} coincide. The dimensions are  $n_hd(M+1)$, $(n_h+N)d(M+1)$, and $N d(M+1)$ (but the order is $\left\lceil\nicefrac{n_h}{N}\right\rceil$) while the corresponding eigenvectors, $c_\phi,\ c,\ c_q$ parametrize segments of the approximate solution emerging from the eigenfunction.

\paragraph*{Choice of polynomial basis and collocation points}
Generalized eigenvalue problem \eqref{wm:pencil} and polynomial eigenvalue problem \eqref{eq:Mfineig} lie at the basis of our algorithm for approximating Floquet multipliers. 
In our implementation, we express polynomials $\{q_n^M\}_{n=-n_h+1}^N$ in a Chebyshev basis,
\begin{equation*}
q^M_n(t)=\sum_{i=0}^M c_{i,n}  T_i\left(2 t-1\right), \quad t\in[0,\,1],
\end{equation*}
where $T_i$ is the $i$-degree Chebyshev polynomial of the first kind, and we take as collocation points $\{\xi_m\}_{m=1}^M$ the corresponding  Chebyshev nodes,
\begin{equation}\label{wm:mesh}
\xi_m=\frac{1}{2}(\alpha_{m}+1),\ \   \alpha_{m}=-\cos\frac{(m-1)\pi}{M},\ m=1,\ldots,M.
\end{equation}
As we shall document in section~\ref{sec:example1} spectral accuracy (approximation error $O(M^{-M})$) is observed for the individual Floquet multiplier approximations. This is expected, as the method can be interpreted in terms of a spectral discretization~\cite{Trefethen2000} of boundary value problem \eqref{eq:difeq} with \eqref{wm:mesh} as collocation points.

To conclude the section we specify matrix $S$ in \eqref{wm:pencil} for a particular case where $N=1$.
\begin{example}
	We reconsider System~\eqref{eq:commensurate} with $T=\tau$, leading to $\Delta=1$. We can express 
		\[S=
			\begin{pmatrix} 
				0 & \cdots & 0 & -(1\ 1\ \cdots\ 1)\otimes I_d & 	(1\ (-1) \ \cdots\ (-1)^{M}) \otimes I_d\\
		-\bm{A_{h}}& \cdots &	-\bm{A_{2}} & 	-\bm{A_{1}} & \bm{U}\otimes I_d-\bm{A_0}
	\end{pmatrix},
	\]
	where 
	\[
	\bm{A_j}=\Delta\begin{pmatrix}
	A_j(\xi_1\Delta)T_0\left(\alpha_1\right)&\ldots &A_j(\xi_1\Delta)T_M\left(\alpha_M\right)\\
	\vdots& 	&\vdots\\
	A_j(\xi_M \Delta)T_0\left(\alpha_M\right)&\ldots &A_j(\xi_M\Delta)T_M\left(\alpha_M\right)
	\end{pmatrix},
	\]
	for $j=0,\ldots, M$, and
	\[		%
	\bm{U}=2\begin{pmatrix}
	0& {1}U_0\left(\alpha_1\right)&\ldots &      M U_{M-1}\left(\alpha_1\right)\\
	\vdots& \vdots& 	&\vdots\\
	0&{1}U_0\left(\alpha_M\right)&\ldots & M U_{M-1}\left(\alpha_M\right)
	\end{pmatrix},
	\]
	with  $U_i$ the $i$-degree Chebyshev polynomial of the second kind.
\end{example}

\subsection{Newton type algorithms for the nonlinear eigenvalue problem}\label{sec:approxnleig}

The alternative formulation \eqref{wm:nleig} of the eigenvalue problem allows us to compute eigenpairs by applying an iterative solver for nonlinear equations to the system
\[
\begin{cases}
\No(\mu)\bm{v}=0 \\
\bm{w}^*\bm{v}=1,
\end{cases} 
\]
where the second equation, with $\bm{w}\in\C^{Nd},$ is a normalization constraint.  Letting $x=(\bm{v}^\transpose\, \mu)^\transpose$ we can compactly write the system in the form $F(x)=0$.  The application of a damped Newton method leads us to the basic iteration 
\begin{equation*}
\begin{cases}
y_i=-H_i F(x_i) & \\
x_{i+1}=x_i+\gamma_i y_i, & \ i=0,1,2,\ldots
\end{cases}
\end{equation*}
where $\gamma_i\in (0,1)$ is the damping factor and $H_i$ represents the employed inverse Jacobian approximation.

In the exact Newton method the true Jacobian is used, which implies
\[
H_i=J(x_i)^{-1}= \begin{pmatrix} \No(\mu_i)& \dv{\No(\mu_i)}{\mu}\bm{v_i}\\ 
\bm{w}^* &0
\end{pmatrix}^{-1}
\]
and a system of equations needs to be solved to obtain direction $y_i$.
To compute $\No(\mu_i)$ we have to solve initial value problem \eqref{eq:nldifeq} from $Nd$ independent initial conditions. To compute matrix-vector product  $\dv{\No(\mu_i)}{\mu}\bm{v_i}$ we can use the following proposition.
\begin{proposition}\label{lemma:deriv} 
	Let $(\mu,\bm{v})$, with $\mu\neq 0$, be an eigenpair of \eqref{wm:nleig}. 
	Let $\bm{q}$ be such that \eqref{eq:nldifeq} is satisfied.  Then we can express
	\[
	\dv{\No(\mu)}{\mu}\bm{v}=\bm{q}_\mu(1)-\bm{v_N},
	\]
	where  $\bm{q}_\mu$ is the solution of initial value problem
	\begin{equation}
	\begin{cases}
	\dot{\bm{q}}_\mu(s)=\pdv{A(s,\mu)}{\mu}\bm{q}(s)+A(s,\mu)\bm{q}_\mu(s),\quad s\in[0,\, 1],\\
	\bm{q}_\mu(0)=\bm{0}
	\end{cases}
	\label{eq:diffmu}
	\end{equation}
	and  $\bm{v_N}=(0\, \cdots\, 0\,v_1^\transpose)^\transpose$, with   $ \bm{v}=(v_1^\transpose\, \cdots\, v_N^\transpose)^\transpose$.
\end{proposition}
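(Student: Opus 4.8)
The plan is to differentiate the defining relation \eqref{nleig2} for $\No(\mu)\bm v$ term by term with respect to $\mu$, treating $\bm v$ as a fixed vector (the argument of the matrix-valued function $\No$, which does not itself depend on $\mu$). From $\No(\mu)\bm v=\bm q(1)-B(\mu)\bm v$ one obtains
\[
\dv{\No(\mu)}{\mu}\bm v=\dv{\bm q(1)}{\mu}-\dv{B(\mu)}{\mu}\,\bm v,
\]
so the proof reduces to identifying the two contributions on the right-hand side.

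For the second term I would observe that the only $\mu$-dependent entry of the block matrix $B(\mu)$ is its lower-left $d\times d$ block $\mu I_d$; differentiating produces a matrix whose sole nonzero block is $I_d$ in that position. Applying it to $\bm v=(v_1^\transpose\,\cdots\,v_N^\transpose)^\transpose$ therefore annihilates every block component except the last, which becomes $v_1$. This is exactly $\bm{v_N}=(0\,\cdots\,0\,v_1^\transpose)^\transpose$, so $\dv{B(\mu)}{\mu}\bm v=\bm{v_N}$.

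For the first term I would set $\bm q_\mu:=\pdv{\bm q}{\mu}$ and show it solves the variational problem \eqref{eq:diffmu}. Since the coefficient matrix $A(s,\mu)$ in \eqref{eq:Asmu} is, for each fixed $s$, analytic in $\mu$ on $\C\setminus\{0\}$ (its entries are finite combinations of the smooth $A_j$ with integer powers $\mu^{a_{n-n_j}}$), standard results on smooth dependence of ODE solutions on a parameter guarantee that $(s,\mu)\mapsto\bm q(s)$ is jointly smooth and that $\partial_\mu$ and $\partial_s$ may be interchanged. Differentiating $\dot{\bm q}(s)=A(s,\mu)\bm q(s)$ in $\mu$ then yields $\dot{\bm q}_\mu(s)=\pdv{A(s,\mu)}{\mu}\bm q(s)+A(s,\mu)\bm q_\mu(s)$, the ODE of \eqref{eq:diffmu}; and differentiating the initial condition $\bm q(0)=\bm v$, which is independent of $\mu$, gives $\bm q_\mu(0)=\bm 0$. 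Hence $\dv{\bm q(1)}{\mu}=\bm q_\mu(1)$, and combining with the previous step produces $\dv{\No(\mu)}{\mu}\bm v=\bm q_\mu(1)-\bm{v_N}$.

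The only genuinely technical point is the invocation of parameter-dependence theory justifying the interchange of derivatives; everything else is a direct computation. I would also remark that the eigenpair hypothesis on $(\mu,\bm v)$ is not in fact used---the identity holds verbatim for any fixed $\bm v\in\C^{Nd}$ and any $\mu\neq 0$---but it is the relevant case, since the derivative of $\No$ is precisely what is needed for the Newton and Broyden iterations of section~\ref{sec:approxnleig}.
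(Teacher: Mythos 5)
Your proposal is correct and follows essentially the same route as the paper's proof: differentiate \eqref{nleig2} and the initial value problem \eqref{eq:nldifeq} with respect to $\mu$, invoking smooth dependence of the solution on $\mu$ over $\C\setminus\{0\}$ to obtain the variational equation \eqref{eq:diffmu}. Your explicit computation of $\dv{B(\mu)}{\mu}\bm{v}=\bm{v_N}$ and the remark that the eigenpair hypothesis is not actually needed are both accurate, just spelled out in more detail than the paper's terse argument.
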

\begin{proof}
	The solution of \eqref{eq:nldifeq}, evaluated at any $s\in[0,\, 1] $, is a smooth function of $\mu$ on $\C\setminus\{0\}$. The assertion follows from differentiating \eqref{nleig2} and \eqref{eq:nldifeq} with respect to $\mu$, the latter giving rise to  variational equation \eqref{eq:diffmu}.
\end{proof}

In the Broyden's quasi-Newton method \cite{Al-Baali2013,Jarlebring2018}, more precisely the so-called \emph{good Broyden's method}, an approximation of the Jacobian inverse is updated in every iteration by a rank one matrix, using the formula
\begin{equation*}
\quad H_{i+1}=H_i-\frac{(H_iF(x_{i+1})+(1-\gamma_i) y_i)\; (y_i^* H_i)}{y_i^*(H_iF(x_{i+1})+y_i)},
\end{equation*}  
where in our implementation we use as initialization  $H_0= I$ or 
\begin{equation}\label{wm:contH}
H_0= J(x_0)^{-1}.
\end{equation}

Since characteristic matrix $\No(\mu)$ is not in explicit form,  we need to numerically compute \eqref{eq:nldifeq} and \eqref{eq:diffmu} for  evaluating matrix-vector products $\No(\mu) v$ and $\frac{d\No(\mu)}{d\mu} v$.
For stiff and non-stiff systems, our implementation  is  based on the use of a Runge-Kutta method of order $4$, and trapezoidal rule  with fixed step-size~$\delta$, respectively. In this way the resulting iterative scheme can also be interpreted as applying a Newton type method to solve the approximate eigenvalue problem
\begin{equation}\label{wm:Ndelta}
\No_\delta(\mu)\bm{v}=0,
\end{equation}  
where $\No_\delta$ is obtained from $\No$ by replacing the exact solution of \eqref{eq:nldifeq} and \eqref{eq:diffmu} by the numerical solution.  
Compared to standard nonlinear eigenvalue problems in explicit form, there is hence a significant additional cost in evaluating the characteristic matrix $\No(\mu)$, which is needed in every iteration of Newton method but avoided in Broyden's method. This  explains why sometimes explicitly computing the inverse in initialization \eqref{wm:contH} may be beneficial.

\subsection{Floquet multipliers computation} \label{sec:FMcomp}	
The results in sections~\ref{sec:discr1st}-\ref{sec:approxnleig} lead us to the following high-level algorithm for computing (part of) the spectrum of the monodromy matrix,
which is conceptually similar to the approach of \cite{Michiels2011} for computing characteristic roots of linear time-invariant systems with delays.
\begin{algo2}\textit{Two-stage approach for computing Floquet multipliers.}\label{algo:FMsmall}
	\begin{enumerate}
		\item Fix $M$ and compute eigenvalues and eigenvectors of \eqref{eq:Mfineig}.
		\item Fix $\delta $ and correct the individual Floquet multiplier approximations, and extracted eigenvector approximations of \eqref{wm:nleig}, by applying Broyden's method to \eqref{wm:Ndelta}.
	\end{enumerate}
\end{algo2}

For problems of moderate dimension~$N d$, we use a direct method to compute all eigenvalues of \eqref{eq:Mfineig} in the first stage, resulting in approximate eigenpairs  $(\mu,c_q)$.   Recall from \eqref{wm:parsol} and the construction in the previous section  that $c_{q}=(c_{1}^\transpose\,\cdots\, c_N^\transpose)^\transpose$ parametrizes the approximate solution corresponding to an eigenfunction of $\mathscr{U}$.  By Lemma~\ref{th:operator} this allows us to extract an approximation of the eigenvector $\bm{v}$ in \eqref{wm:nleig}, 
\[
\bm{v}=(v_1^\transpose\, \cdots \, v_N^\transpose)^\transpose,\ \  v_n=\sum_{i=0}^Mp_i(0)c_{i,n}, \ n=1,\ldots,N,  
\]
that is used in the initialization of the second stage, along with \eqref{wm:contH} as initialization of the inverse Jacobian approximation.

For problems with high dimension~$Nd$, it is computationally infeasible to use a direct solver in the first stage of Algorithm~\ref{algo:FMsmall}, and iterative eigensolvers are to be preferred. In section~\ref{sec:example3}, we employ Arnoldi method to matrix $\mathscr{U}_M$  in \eqref{eq:Minfeig}, where every iteration requires solving a system of equation with matrix $S_q$ since
\begin{align}
\begin{split}\label{eq:Arnoldi}
\mathscr{U}_Mc_\phi&=\begin{pmatrix} 0&  I_{n_hd(M+1)} \end{pmatrix}\begin{pmatrix} S_\phi & S_q\\ I & 0\end{pmatrix}^{-1}\begin{pmatrix} 0\\  I_{n_hd(M+1)} \end{pmatrix}c_\phi
\\&=\begin{pmatrix} 0&  I_{n_hd(M+1)} \end{pmatrix}\begin{pmatrix} c_\phi \\ -S_q^{-1}S_\phi c_\phi\end{pmatrix}.
\end{split}
\end{align}
In this way eigenvector approximation are obtained in the form of coefficient vector  $c_\phi$.
The latter can be turned into an eigenvector approximation of \eqref{wm:nleig} by numerically solving initial value problem \eqref{eq:ivp} on the interval $[0,\,T]$, with $\phi$ the $M$-degree spline defined by the coefficients $c_\phi$. The inverse Jacobian in Broyden's method is initialized with the identity matrix.

\smallskip

Algorithm~\ref{algo:FMsmall} turns out to be very efficient for computing dominant Floquet multipliers.  As the first step serves to scan the complex plane, the basic requirement on $M$ is that the obtained approximation of the dominant Floquet multiplier and corresponding eigenvector is in the region of attraction of Broyden's method. As the second step serves for local improvement (and discarding spurious eigenvalues), parameter $\delta$ should be chosen sufficiently small not only to effectively take the role of corrector but to be able to reach the desired final accuracy on the Floquet multipliers (recall that modified problem \eqref{wm:Ndelta} is actually solved in the second stage).
\begin{remark} As we have seen in section~\ref{sec:discr1st}, the  discretized eigenvalue problem 
	\eqref{eq:Mfineig} can also be derived from \eqref{wm:nleig} by approximating the solution of the differential equation in \eqref{eq:nldifeq}. This approach was based on  solving \emph{boundary value problem} \eqref{eq:difeq}, parametrized by $\mu$, for which the collocation approach is appropriate. In the context of local corrections we need to solve \emph{initial value problem} \eqref{eq:nldifeq} for specified values of $\mu$ and $\bm{v}$, for which a time-stepping method is preferable. 
\end{remark}

\begin{remark}
	If a factorization of matrix $S_q$ or even storing the matrices of the discretized eigenvalue problem would be infeasible, 
	one may be able to apply Broyden's method based on the nonlinear eigenvalue problem formulation (the second stage). In this case, multiple Floquet multiplier approximations can still be obtained,  using the deflation technique as presented in \cite{Effenberger2013,Jarlebring2018}. 
\end{remark}

\section{Characterization and computation of left eigenvectors} \label{sec:left}

We again consider nonlinear eigenvalue problem \eqref{wm:nleig}, with $\mathcal{N}$ defined by \eqref{nleig2}-\eqref{eq:nldifeq}. 		
Vector $\bm{u}\in\C^{Nd}\setminus\{0\}$ is  called a left eigenvector of $\mathcal{N}$, corresponding to eigenvalue $\mu$ if 
$$
\bm{u}^* \mathcal{N}(\mu)=0,
$$ 
or, equivalently,  $\mathcal{N}(\mu)^* \bm{u}=0$.  In order to give an expression in terms of a right eigenvector, we first define the ``transposed" nonlinear eigenvalue problem
\begin{equation}\label{wm:dual2}
\Mo(\mu) \bm{v}=0,
\end{equation} 
where for given $\mu\in\C\setminus\{0\}$ and $\bm{v}\in\C^{Nd}$ the matrix vector product $\Mo(\mu) \bm{v}$  is determined as
\begin{equation}\label{wm:dual}
\Mo(\mu) \bm{v}=\bm{p}(0)-B(\mu)^\transpose \bm{v},
\end{equation}
where $\bm{p}$ is the solution of problem 
\begin{equation}
\begin{cases}
\dot{\bm{p}}(s)=-A(s,\mu)^\transpose\bm{p}(s), \quad s\in[0,\,1],\\
\bm{p}(1)=\bm{v}.
\end{cases}
\label{eq:adjoint}
\end{equation}
This bring us to the following result.
\begin{theorem} 		\label{th:adjoint}
	Let $\bm{u}\in\C^{Nd}$ and $\mu\in\C\setminus\{0\}$. Vector $\bm{u}$ is a left eigenvector of \eqref{wm:nleig} corresponding to  eigenvalue $\mu$ if and only if it is a right  eigenvector
	of  problem \eqref{wm:dual} corresponding to eigenvalue $\bar\mu$.  
\end{theorem}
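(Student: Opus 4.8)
The plan is to reduce the abstract left/right eigenvector statement to an explicit matrix identity relating $\No$ and $\Mo$, and then to account for the passage from $\mu$ to $\bar\mu$ through the reality of the coefficient matrices. First I would make the two characteristic matrices explicit. Since the map $\bm{v}\mapsto\bm{q}(1)$ defined by the initial value problem \eqref{eq:nldifeq} is linear, there is a state-transition (fundamental) matrix $\Phi(\mu)\in\C^{Nd\times Nd}$ of $\dot{\bm{q}}=A(s,\mu)\bm{q}$ with $\bm{q}(1)=\Phi(\mu)\bm{q}(0)$, so that \eqref{nleig2} reads $\No(\mu)=\Phi(\mu)-B(\mu)$. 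With this notation the left-eigenvector condition $\No(\mu)^*\bm{u}=0$ becomes $\big(\overline{\Phi(\mu)-B(\mu)}\big)^{\transpose}\bm{u}=0$, and the task is to identify this with $\Mo(\bar\mu)\bm{u}=0$.

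The heart of the argument --- and the step I expect to require the most care --- is the identity $\Mo(\mu)=\No(\mu)^{\transpose}$, which I would obtain from a Lagrange/adjoint identity. For $\bm{q}$ any solution of the differential equation in \eqref{eq:nldifeq} and $\bm{p}$ the solution of the backward problem \eqref{eq:adjoint} with $\bm{p}(1)=\bm{v}$, a direct computation gives $\frac{d}{ds}\big(\bm{p}(s)^{\transpose}\bm{q}(s)\big)=\dot{\bm{p}}^{\transpose}\bm{q}+\bm{p}^{\transpose}\dot{\bm{q}}=-\bm{p}^{\transpose}A(s,\mu)\bm{q}+\bm{p}^{\transpose}A(s,\mu)\bm{q}=0$, so $\bm{p}(s)^{\transpose}\bm{q}(s)$ is constant on $[0,1]$. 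Equating its values at $s=0$ and $s=1$, using $\bm{q}(1)=\Phi(\mu)\bm{q}(0)$ and $\bm{p}(1)=\bm{v}$, yields $\bm{v}^{\transpose}\Phi(\mu)\bm{q}(0)=\bm{p}(0)^{\transpose}\bm{q}(0)$ for every $\bm{q}(0)$, hence $\bm{p}(0)=\Phi(\mu)^{\transpose}\bm{v}$. Substituting into \eqref{wm:dual} gives $\Mo(\mu)\bm{v}=\Phi(\mu)^{\transpose}\bm{v}-B(\mu)^{\transpose}\bm{v}=\No(\mu)^{\transpose}\bm{v}$, i.e. $\Mo(\mu)=\No(\mu)^{\transpose}$.

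It remains to handle the conjugation. Since each $A_j$ is real-valued and $A(s,\mu)$, $B(\mu)$ depend on $\mu$ only through the integer powers $\mu^{a_k}$ appearing in \eqref{eq:Asmu} and through the single entry $\mu$ of $B(\mu)$, one has $\overline{A(s,\mu)}=A(s,\bar\mu)$ and $\overline{B(\mu)}=B(\bar\mu)$ on $\C\setminus\{0\}$. Conjugating the initial value problem \eqref{eq:nldifeq} shows that $\overline{\Phi(\mu)}$ is the transition matrix of $\dot{\bm{q}}=A(s,\bar\mu)\bm{q}$, i.e. $\overline{\Phi(\mu)}=\Phi(\bar\mu)$, whence $\overline{\No(\mu)}=\No(\bar\mu)$. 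Combining the two facts,
$$
\No(\mu)^*=\overline{\No(\mu)}^{\transpose}=\No(\bar\mu)^{\transpose}=\Mo(\bar\mu),
$$
so $\No(\mu)^*\bm{u}=0$ if and only if $\Mo(\bar\mu)\bm{u}=0$; that is, $\bm{u}$ is a left eigenvector of \eqref{wm:nleig} at $\mu$ exactly when it is a right eigenvector of \eqref{wm:dual} at $\bar\mu$.

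The only delicate points are the orientation of the adjoint equation \eqref{eq:adjoint} (the sign and the terminal condition at $s=1$ are precisely what make the bilinear form $\bm{p}^{\transpose}\bm{q}$ conserved, and thus the flow-transposition clean) and the verification that the $\mu$-dependence enters only through powers with real coefficients, so that complex conjugation acts simply as $\mu\mapsto\bar\mu$; both are routine given the explicit forms of $A(s,\mu)$ and $B(\mu)$.
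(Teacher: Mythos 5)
Your proof is correct and rests on the same key mechanism as the paper's: the Lagrange/adjoint identity obtained by differentiating the pairing of the forward solution $\bm{q}$ with the backward solution $\bm{p}$ of \eqref{eq:adjoint}, which yields the identity $\Mo(\mu)=\No(\bar\mu)^*$ (the paper verifies this directly via the sesquilinear form $\langle \No(\bar\mu)\bm{u},\bm{v}\rangle-\langle\bm{u},\Mo(\mu)\bm{v}\rangle=0$, whereas you factor it as $\Mo(\mu)=\No(\mu)^{\transpose}$ plus $\overline{\No(\mu)}=\No(\bar\mu)$). The factorization through the real transition matrix identity and the separate conjugation step is a clean, equivalent packaging of the same argument.
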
 
\begin{proof}
	It suffices to prove that for any $\mu\in\C\setminus\{0\}$ we have
	$
	\Mo(\mu)=\No(\bar\mu)^*. 
	$	
	For this,	
	take  arbitrary vectors $\bm{u} \in\C^{Nd}$ and $\bm{v}\in\C^{Nd}$.	With  $\langle \cdot, \cdot \rangle$  the Euclidean inner product in $\C^{Nd}$ and with $\bm{q}$ satisfying
	\[
	\begin{cases}
	\dot{\bm{q}}(s)=A(s,\bar\mu)\bm{q}(s), \qquad &s\in[0,\,1],\\
	\bm{q}(0)=\bm{u},
	\end{cases}
	\]
	we can derive
	\[
	\begin{array}{rcl}
	\langle \No(\bar \mu)\bm{u},\bm v\rangle -\langle \bm{u}, \Mo(\mu)\bm{v}\rangle 
	&=&\bm{q}(1)^* \bm{v}-\bm{u}^*B(\bar \mu)^* \bm{v} - \bm{u}^* \bm{p}(0)  +\bm{u}^*B(\mu)^\transpose \bm{v} \\
	&=&\bm{q}(1)^* \bm{v}-\bm{u}^* \bm{p}(0) \\
	&=&\bm{q}(1)^*\bm{p}(1)-\bm q(0)^* \bm{p}(0) \\
	&=&\int_0^1  \left(\bm{q}(s)^* \bm{p}(s)\right)^\prime\dd s\\
	&=&\int_{0}^{1} \dot{\bm{q}}(s)^* \bm{p}(s) +\bm{q}(s)^* \dot{\bm{p}}(s)\dd s \\
	&=&\int_{0}^{1}   \bm{q}(s)^* A(s,\bar \mu)^*\bm{p}(s)  - \bm{q}(s)^* A(s,\mu)^\transpose \bm{p}(s)  \dd s\\
	&=&0,
	\end{array}
	\]
	which concludes the proof.
\end{proof}
\begin{example} For time-delay system \eqref{eq:commensurate} with $\tau=T$ we have
	$\Mo(\mu) v=p(0)-\mu v$ where
	\begin{equation*}
	\begin{cases}
	\dot{p}(t)=-\tau\sum_{j=0}^h A_j(s\tau)^\transpose\;\frac{p(t)}{\mu^j}, \quad s\in[0,\,1];\\
	p(1)=v.
	\end{cases}
	\end{equation*}
\end{example}

In the spirit of Theorem~\ref{th:eigenpair} we now establish, as the main result of this section, a connection between transposed nonlinear eigenvalue problem  \eqref{wm:dual} and the monodromy operator corresponding to the following periodic time-delay system:
\begin{equation}\label{wm:system:dual}
\dot x(t)=\sum_{j=0}^h A_j^\transpose(-t+\tau_j)\; x(t-\tau_j),
\end{equation}
where, in addition to taking the transpose of the coefficient matrices, a reserve and shift of time have taken place in these coefficient matrices.
In the formulation we will make use of the permutation matrix  $R\in\R^{Nd\times Nd}$ defined as
\[
R=\begin{pmatrix} && I_d \\ &\iddots &  \\ I_d &&  \end{pmatrix}.
\]
\begin{theorem}\label{th:dual}
	Let $\mathscr{D}$ be the monodromy matrix corresponding to system 
	\eqref{wm:system:dual}. Let $\hat \mu\in \C\setminus\{0\}$. 
	If the pair $(\hat \mu,\hat \phi)$ is a solution of 
	\begin{equation}\label{wm:infeig2}
	\mathscr{D}\phi=\mu\;\phi,\ \mu\in\C,\ \phi\in X\setminus\{0\},
	\end{equation}
	then $(\hat \mu,\hat{\bm{v}})$ is a solution of 
	\begin{equation}\label{wm:nleig2}
	\Mo(\mu)\bm{v}=0,\ \  \mu\in\C,\ \bm{v}\in\C^{Nd},
	\end{equation}	
	where   $\hat{\bm{v}}=(v_1^\transpose\,\cdots\,v_N^\transpose)^\transpose$ is determined by 
	\begin{equation}
	v_n=x((N-n)\Delta;0,\hat \phi),\qquad n=1,\ldots,N,
	\end{equation}
	and $x(t;t_0,\phi)$ the solution of \eqref{wm:system:dual} with initial condition $\phi$ at time $t_0$.
	
	Conversely,  if the pair $(\hat\mu,\hat{\bm{v}})$ is a solution of \eqref{wm:nleig2}, then $(\hat\mu,\hat \phi)$ is a solution of \eqref{wm:infeig2},  where 
	\[
	\hat\phi(t) =  {\hat\mu}^{\left\lfloor\frac{n-1}{N} \right\rfloor} \hat{p}_{(n-1)\bmod_N+1}\left(\frac{t}{\Delta}-(n-1)\right),\ t\in((n-1)\Delta,\, n\Delta],\, n=-n_h+1,\ldots,0,
	\]
	and $\hat{\bm{p}}(s)=(\hat{p}_1^\transpose(s)\,\cdots\,\hat{p}_N^\transpose(s))^\transpose$ satisfies 
	\begin{equation}\label{wm:defphat}
	\hat{\bm{p}}(s)=R \bm{p}(1-s),
	\end{equation} 
	with $\bm{p}$ the solution of initial value problem \eqref{eq:adjoint} for  $\bm{v}=R{\hat{ \bm{v}}}$ and $\mu={\hat{ \mu}}$.
\end{theorem}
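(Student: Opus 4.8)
The plan is to deduce the statement from Theorem~\ref{th:eigenpair}, applied not to \eqref{eq:system} but to the transposed system \eqref{wm:system:dual}, combined with an algebraic identity that relates the nonlinear eigenvalue problem naturally attached to \eqref{wm:system:dual} to the transposed problem \eqref{wm:dual} through the permutation $R$. First I would check that \eqref{wm:system:dual} is itself an admissible system of the form \eqref{eq:system}: its coefficient matrices $\tilde A_j(t):=A_j^\transpose(-t+\tau_j)$ are smooth and $T$-periodic (the shift by $T$ in the argument is absorbed by the $T$-periodicity of $A_j$), and it carries the same delays $\tau_j=n_j\Delta$ and period $T=N\Delta$, so Assumption~\ref{as1} holds. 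Theorem~\ref{th:eigenpair} then applies verbatim and ties the monodromy matrix $\mathscr D$ to a nonlinear eigenvalue problem $\tilde\No(\mu)\bm v=0$, where $\tilde\No(\mu)\bm v=\tilde{\bm q}(1)-B(\mu)\bm v$, with $\tilde{\bm q}$ the solution of initial value problem \eqref{eq:nldifeq} for the coefficient matrix $\tilde A(s,\mu)$ built from $\tilde A_j$ via \eqref{eq:Asmu}; the eigenvector is read off as $\tilde v_n=x((n-1)\Delta;0,\hat\phi)$ and the eigenfunction is reconstructed through $\tilde{\bm q}$.

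The core of the argument is then the algebraic identity $\tilde\No(\mu)=R\,\Mo(\mu)\,R$. I would prove it by first establishing the block identity
\[
\tilde A(s,\mu)=R\,A(1-s,\mu)^\transpose R .
\]
This is checked block by block. The condition $b_{n-n_j}=m$ that selects the nonzero $(n,m)$-block turns out to be invariant under the combined reversal $n\mapsto N+1-n$, $m\mapsto N+1-m$ together with transposition; the $T$-periodicity of $A_j$ makes the two arguments $(1-s+N-m)\Delta$ and $-(s+n-1)\Delta+\tau_j$ coincide modulo $T$; and the floor/mod bookkeeping gives $a_{(N+1-m)-n_j}=a_{n-n_j}$ for $n\in\{1,\dots,N\}$, so that the powers of $\mu$ agree as well. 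Granting this identity, if $\bm p$ solves the adjoint initial value problem \eqref{eq:adjoint}, then $R\,\bm p(1-\cdot)$ solves the forward problem for $\tilde A(s,\mu)$ — which is exactly the content of \eqref{wm:defphat}.

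With the elementary relation $R\,B(\mu)^\transpose R=B(\mu)$ in hand (the matrix $B(\mu)$ is a cyclic shift and is invariant under this conjugation), I would substitute $\tilde{\bm q}(s)=R\,\bm p(1-s)$ into $\tilde\No(\mu)\bm v=\tilde{\bm q}(1)-B(\mu)\bm v$, eliminate $\bm p(0)$ by means of the defining relation \eqref{wm:dual} for $\Mo$, and collect terms to obtain $\tilde\No(\mu)=R\,\Mo(\mu)\,R$. Since $R$ is an involution, this shows that the solution sets of $\tilde\No(\mu)\bm v=0$ and $\Mo(\mu)\bm v=0$ are exchanged by $\bm v\mapsto R\bm v$. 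For the forward direction I would take a solution $(\hat\mu,\hat\phi)$ of \eqref{wm:infeig2}, pass to $\tilde{\bm v}$ via Theorem~\ref{th:eigenpair} applied to \eqref{wm:system:dual}, and set $\hat{\bm v}=R\tilde{\bm v}$; the reversal turns $\tilde v_n=x((n-1)\Delta;0,\hat\phi)$ into $\hat v_n=x((N-n)\Delta;0,\hat\phi)$, as claimed. For the converse I would start from $\Mo(\hat\mu)\hat{\bm v}=0$, set $\tilde{\bm v}=R\hat{\bm v}$, invoke the reconstruction of Theorem~\ref{th:eigenpair} for \eqref{wm:system:dual}, and replace $\tilde{\bm q}$ by $\hat{\bm p}(s)=R\,\bm p(1-s)$ through the block identity (with $\bm p$ the adjoint solution whose terminal value matches, namely $\bm p(1)=R\tilde{\bm v}$), recovering the stated expression for $\hat\phi$ in terms of $\hat{\bm p}$.

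The main obstacle is precisely the block-index identity $\tilde A(s,\mu)=R\,A(1-s,\mu)^\transpose R$: matching the powers of $\mu$ and the only-periodically-defined arguments of $A_j$ under the simultaneous time reversal $s\mapsto 1-s$, transposition, and index reversal $n\mapsto N+1-n$ is where all the combinatorial bookkeeping concentrates, and the interplay between the floor/mod indexing of \eqref{eq:Asmu} and the $T$-periodicity must be handled with care (in particular the restriction $n\in\{1,\dots,N\}$ is what makes the $\mu$-exponents line up). Once this identity and the auxiliary relations $R\,B(\mu)^\transpose R=B(\mu)$ and $\tilde{\bm q}=R\,\bm p(1-\cdot)$ are established, the remaining reduction to Theorem~\ref{th:eigenpair} is a short substitution.
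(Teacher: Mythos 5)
Your proposal is correct and follows essentially the same route as the paper: conjugation by the reversal permutation $R$ combined with the time reversal $s\mapsto 1-s$, the identity $R\,B(\mu)^\transpose R=B(\mu)$, the block identity $R\,A(1-s,\mu)^\transpose R=\tilde A(s,\mu)$ verified through the floor/mod bookkeeping and $T$-periodicity, and the final reduction to Theorem~\ref{th:eigenpair} applied to system \eqref{wm:system:dual}. The only cosmetic difference is the direction of travel — the paper rewrites $\Mo$ as $\tilde{\Mo}=R\,\Mo\,R$ and then recognizes it as the characteristic matrix of \eqref{wm:system:dual}, whereas you build the characteristic matrix of \eqref{wm:system:dual} first and show it equals $R\,\Mo\,R$ — which does not change the substance of the argument.
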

\begin{proof}
	Let $\tilde{\Mo}(\mu)=R \Mo(\mu) R$ and $\tilde{ \bm{v}}=R\bm{v}$. In these variables,  using substitution \eqref{wm:defphat} and taking into account $RR=I$, nonlinear eigenvalue problem \eqref{wm:dual2}-\eqref{eq:adjoint} can be rewritten as  
	\begin{equation}\label{wm:nleigproof}
	\tilde{\Mo}(\mu) \tilde{\bm{v}}=0,
	\end{equation}
	where\[
	{\tilde{\Mo}}(\mu) \tilde{\bm{v}}=\tilde{\bm{p}}(1)-\left(RB(\mu)^\transpose R\right)\; \tilde{\bm{v}}
	\]
	and
	\[
	\begin{cases}
	\dot{\tilde{\bm{ p}}}(s)=\left(R A(1-s,\mu)^\transpose R\right)\; \tilde{\bm{ p}}(s), \quad s\in[0,\,1],\\
	\tilde{\bm{ p}}(0)=\tilde{\bm{ v}}.
	\end{cases}
	\]
	We have $$R B(\mu)^\transpose R=B(\mu),$$
	while the $d\times d$ block of $RA(1-s,\mu)^\transpose R$ at position $(dn,dk)$ for $n,k=0,\ldots,N-1$ satisfies
	\[
	\begin{array}{l}
	\Delta \sum_{\substack{j=0,\ldots,h\\ b_{n-n_j}=k} } 	A_j^\transpose\left( (N-k)\Delta +(1-s)\Delta\right)  \mu^{a_{n-n_j}} \\
	=
	\Delta \sum_{\substack{j=0,\ldots,h\\ b_{n-n_j}=k} }	A_j^\transpose\left( (N+(1-s)-b_{n-n_j}) \Delta \right)  \mu^{a_{n-n_j}} \\
	=
	\Delta \sum_{\substack{j=0,\ldots,h\\ b_{n-n_j}=k} }	A_j^\transpose\left( -(s+n-1)\Delta+n_j\Delta \right) \mu^{a_{n-n_j}}.
	\end{array}
	\] 
	The proof is complete by noting, from a comparison of this expression with \eqref{eq:Asmu}, that   nonlinear eigenvalue problem \eqref{wm:nleigproof} is equivalent to eigenvalue problem \eqref{wm:infeig2} in the sense of Theorem~\ref{th:eigenpair}. 
\end{proof}

Based on Theorems~\ref{th:adjoint}-\ref{th:dual} the methods for computing right eigenpairs as presented in the previous section can be trivially adapted to compute left eigenpairs.

\section{Floquet multipliers derivatives and stability optimization}\label{sec:deFloMustab}

In this section we assume that the matrices in \eqref{eq:system} depend on system or controller parameters $
\bK=(K_1\, \cdots \, K_k)^\transpose\in\R^k$. More precisely, we consider system
\begin{equation}
\dot{x}(t)=\sum_{j=0}^hA_j(t;\bK)x(t-\tau_j),
\label{eq:systemcontrol}
\end{equation}
under assumption that functions $A_j:\ \R\times \R^{k}\mapsto\R^{d\times d}$, $(t,\bK)\mapsto A_j(t;\bK)$ are smooth and $T$-periodic in the first argument, for $j=0,\ldots,h$. Note that we use a dot-comma to separate variables and parameters. Accordingly, differential equation \eqref{eq:difeq} changes to
\[
\dot{\bm{q}}(s)=A(s,\mu;\bK)\;\bm{q}(s)
\]
and we denote the nonlinear eigenvalue problem by $\No(\mu;\bK)\bm{v}=0$.
However, to simplify the notations, we will  omit the parametric argument whenever it is not essential for the understanding. 

Section~\ref{sec:deNLeig}  addresses the characterization of derivatives of simple Floquet multipliers with respect to parameters $\bK$, inferred from  nonlinear eigenvalue problem \eqref{wm:nleig}, and outlines their computation. 
These results are at the basis of a method for stability optimization of periodic systems with delay, presented in section~\ref{sec:stabopt}.

\subsection{Derivatives of Floquet multipliers}\label{sec:deNLeig}
The following proposition provides an explicit expression for derivatives of a Floquet multiplier with respect to parameters.
\begin{proposition}\label{th:pderiv}
	Let $(\mu,\bm{v})$, with $\mu\neq 0$, be an eigenpair of \eqref{wm:nleig}. 
	Assume that eigenvalue $\mu$ has multiplicity one and let $\bm{u}$ be the left eigenvector, \ie  $\bm{u}^*\No(\mu;\bK)=0$.
	Let $\bm{q}$ be such the \eqref{eq:nldifeq} is satisfied.  Then for each $j\in\{1,\ldots,k\}$ we can express
	\begin{equation}
	\pdv{\mu}{K_j}=-\frac{\bm{u}^*\bm{q}_{K_j}(1)}{\bm{u}^*\bm{q}_\mu(1) -\bm{u}^*\bm{v_N}},
	\label{eq:pderivmu}
	\end{equation}
	where  $\bm{q}_\mu$ and $\bm{q}_{K_j}$ are  solution of initial value problem
	\begin{equation}
	\begin{cases}
	\dot{\bm{q}}_{K_j}(s)=\pdv{A(s,\mu;\bK)}{K_j}\bm{q}(s)+A(s,\mu;\bK)\bm{q}_{K_j}(s)\\ \dot{\bm{q}}_\mu(s)=\pdv{A(s,\mu;\bK)}{\mu}\bm{q}(s)+A(s,\mu,\bK)\bm{q}_\mu(s)\\
	\bm{q}_{K_j}(0)={0}, \quad \bm{q}_\mu(0)={0}
	\end{cases}
	\label{eq:diffsys}
	\end{equation}
	and $\bm{v_N}$ is as in Proposition~\ref{lemma:deriv}. 
\end{proposition}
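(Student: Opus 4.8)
The plan is to use the standard first-order perturbation argument for nonlinear eigenvalue problems, combined with the variational-equation representation of the partial derivatives of $\No$ already established in Proposition~\ref{lemma:deriv}. The overall strategy is to differentiate the eigenvalue relation with respect to $K_j$, annihilate the unknown eigenvector sensitivity by left-multiplication with the left eigenvector $\bm{u}$, and then recognize the two surviving matrix-vector products as solutions of the variational equations in \eqref{eq:diffsys}.

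First I would argue that, under the simplicity assumption, both $\mu$ and a suitably normalized eigenvector $\bm{v}$ depend smoothly on $\bK$ in a neighbourhood of the nominal parameter value. This follows from the implicit function theorem applied to the augmented system $\No(\mu;\bK)\bm{v}=0$, $\bm{w}^*\bm{v}=1$: algebraic simplicity of $\mu$ guarantees that the Jacobian of this system with respect to $(\bm{v},\mu)$ is nonsingular, which in particular yields the nondegeneracy condition $\bm{u}^*(\partial\No/\partial\mu)\bm{v}\neq 0$, so the denominator in \eqref{eq:pderivmu} does not vanish. Smoothness of $A(s,\mu;\bK)$ in its arguments, together with the smooth-dependence theorem for initial value problems, ensures that the maps $(\mu,\bK)\mapsto\bm{q}(s)$ are differentiable, so that the variational equations in \eqref{eq:diffsys} are well defined.

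Next I would differentiate the identity $\No(\mu(\bK);\bK)\bm{v}(\bK)=0$ with respect to $K_j$, obtaining
\[
\frac{\partial\No}{\partial\mu}\bm{v}\,\frac{\partial\mu}{\partial K_j}+\frac{\partial\No}{\partial K_j}\bm{v}+\No\,\frac{\partial\bm{v}}{\partial K_j}=0.
\]
Left-multiplying by $\bm{u}^*$ and invoking $\bm{u}^*\No(\mu;\bK)=0$ eliminates the unknown eigenvector sensitivity $\partial\bm{v}/\partial K_j$, leaving
\[
\frac{\partial\mu}{\partial K_j}=-\frac{\bm{u}^*\,(\partial\No/\partial K_j)\,\bm{v}}{\bm{u}^*\,(\partial\No/\partial\mu)\,\bm{v}}.
\]
It then remains to identify the two matrix-vector products. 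For the denominator I would invoke Proposition~\ref{lemma:deriv} directly, which gives $(\partial\No/\partial\mu)\bm{v}=\bm{q}_\mu(1)-\bm{v_N}$, reproducing the denominator of \eqref{eq:pderivmu}. For the numerator I would repeat the reasoning behind that proposition, differentiating the defining relation \eqref{nleig2}--\eqref{eq:nldifeq} with respect to $K_j$ at fixed $\mu$ and $\bm{v}$: since $B(\mu)$ is independent of $\bK$, only the term $\bm{q}(1)$ contributes, so that $(\partial\No/\partial K_j)\bm{v}=\bm{q}_{K_j}(1)$, where $\bm{q}_{K_j}$ solves the variational equation in \eqref{eq:diffsys}. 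The zero initial condition $\bm{q}_{K_j}(0)=0$ arises precisely because the initial datum $\bm{q}(0)=\bm{v}$ is held fixed while differentiating the matrix function with respect to the parameter. Substituting both expressions gives \eqref{eq:pderivmu}.

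The main obstacle is genuinely the first step: rigorously establishing differentiability of the simple Floquet multiplier and the non-vanishing of the denominator, which is where the multiplicity-one hypothesis is used in an essential way. Once smooth dependence on $\bK$ is secured, the remainder is a routine repetition of the variational-equation computation of Proposition~\ref{lemma:deriv}, with the role of $\mu$ played by $K_j$, and the fact that $B(\mu)$ carries no $\bK$-dependence makes the numerator strictly simpler than the $\mu$-derivative.
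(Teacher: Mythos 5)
Your proof is correct and follows essentially the same route as the paper: the paper invokes the standard formula $\partial\mu/\partial K_j = -\bigl(\bm{u}^*(\partial\No/\partial K_j)\bm{v}\bigr)/\bigl(\bm{u}^*(\partial\No/\partial\mu)\bm{v}\bigr)$ for a simple eigenvalue (citing it rather than rederiving it as you do) and then identifies the two matrix-vector products via the variational equations exactly as in Proposition~\ref{lemma:deriv}. Your additional detail on the implicit-function-theorem justification of smooth dependence and the non-vanishing denominator is a welcome elaboration of what the paper leaves implicit in the word ``simple.''
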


\begin{proof} Applying the well known formula for a derivative of a simple eigenvalue \cite[Lemma~2.7]{Schreiber2008}  to \eqref{wm:nleig} yields
	\begin{equation}\label{eq:demuth}
	\pdv{\mu}{K_j}=-\frac{\bm{u}^* \frac{\partial \No(\mu;\bK)}{\partial K_j} \bm{v}  } { \bm{u}^*  \frac{\partial \No(\mu;\bK)}{\partial \mu}  \bm{v}  }.
	\end{equation} 	
	The remainder of the proof is analogous to the proof of Proposition~\ref{lemma:deriv}, relying on the variational equations corresponding to \eqref{eq:nldifeq} and parameters $\mu$, respectively, $K_j$. 	
\end{proof}

The previously presented results bring us to the following algorithm for computing the gradient of a simple Floquet multiplier with respect to parameters vector $\bK$.

\begin{algo2}\textit{Computation of gradient of Floquet multiplier.}\label{algo:deFM}
	\begin{enumerate}
		\item Compute the targeted Floquet multiplier $\mu$ and its right eigenvector $\bm{v}$  by Algorithm~\ref{algo:FMsmall}. 
		\item Compute  $\bm{u}\in\C^{Nd}$ as the corresponding left eigenvector.
		\item Solve initial value problem
			\begin{equation}
			\begin{cases}
			\dot{\bm{q}}(s)=A(s,\mu;\bK) \bm q(s)\\ 
			\dot{\bm{q}}_{K_j}(s)=\pdv{A(s,\mu;\bK)}{K_j}\bm{q}(s)+A(s,\mu;\bK)\bm{q}_{K_j}(s),\ j=1,\ldots, k\\ \dot{\bm{q}}_\mu(s)=\pdv{A(s,\mu;\bK)}{\mu}\bm{q}(s)+A(s,\mu,\bK)\bm{q}_\mu(s)\\
			\bm{q}(0)=\bm{v},\quad  \bm{q}_{K_j}(0)={0},\ j=1,\ldots, k, \quad \bm{q}_\mu(0)={0}
			\end{cases}
			\label{wm:solvtot}
			\end{equation}
		on the interval $[0,\, 1]$ and compute,  by formula \eqref{eq:pderivmu}, the partial derivatives of the Floquet multiplier with respect to the elements of $\bK$.
	\end{enumerate}
\end{algo2}
If dimension $Nd$ is small, vector $\bm{u}$ can be computed as  singular vector corresponding to the smallest singular value of  matrix $\No_\delta$, obtained by solving initial value problem \eqref{eq:nldifeq} with $Nd$ independent initial vectors $\bm{v}$. For problems with high dimension $Nd$, the construction of the full matrix $\No_\delta$  for the second step of Algorithm~\ref{algo:deFM} is to be avoid.  In this situation we can separately apply  Arnoldi method and, possibly, Broyden's correction initialized with $H_0=I$,  as discussed in the previous section~\ref{sec:FMcomp},  to original system \eqref{eq:system} and to \eqref{wm:system:dual}, followed by a pairing of Floquet multipliers and  associated eigenvectors.

\begin{remark}\label{rem:consistency}
	In our implementation,  we use the same integration method with fixed step $\delta$ for numerically solving \eqref{eq:adjoint} and \eqref{wm:solvtot} as we use for \eqref{eq:nldifeq}. In particular, if the left eigenvector is computed as left singular vector  of $\No_\delta$, we  solve \eqref{eq:nldifeq} and \eqref{wm:solvtot} by either Runge-Kutta of order $4$ for non-stiff problems or trapezoidal rule for stiff problems. If the left eigenvector is computed from the discretized monodromy operator of system  \eqref{wm:system:dual}, followed by corrections using  \eqref{wm:dual}, then the initial value problems  \eqref{eq:nldifeq}, \eqref{eq:adjoint} and \eqref{wm:solvtot} are solved by trapezoidal rule, a symmetric mixed implicit-explicit scheme. The consistency between integration schemes permits to interpret the result of  Algorithm~\ref{algo:deFM} as the gradient of eigenvalue of $\No_\delta(\mu,\bK)v=0$.
\end{remark}	

\subsection{Stability optimization}\label{sec:stabopt}
The stabilization problem of an unstable system of form  \eqref{eq:systemcontrol} corresponds to finding controller parameters $\bK$ such that the Floquet multipliers are all confined to the open unit disc.
This problem, as well as the problem of increasing the 
decay rate of solutions of a stable system towards the zero equilibrium, lead us to the optimization problem of  minimizing the (squared) spectral radius of the monodromy operator as a function of controller parameters $\bK$,
\begin{equation}
\min_{\bK} \rho(\mathscr{U})^2,\ \ \ \label{eq:stabopt}
\end{equation}
where
\[
\rho(\mathscr{U})=\max_{\mu\in\sigma(\mathscr{U})} \lvert \mu\rvert.
\] 
Here $\sigma(\cdot)$ and $\rho(\cdot)$ denotes the spectrum, respectively spectral radius.

The objective function in \eqref{eq:stabopt} is in general non-convex. It may not be everywhere differentiable, even not everywhere Lipschitz continuous, although a Floquet multiplier with multiplicity one locally defines a smooth function of parameters $\bK$. The lack of smoothness  is related to the occurrence of multiple  dominant Floquet multipliers (counted with multiplicity).  In general, points of non-differentiability  occur on a set with measure zero in the parameter space, meaning that the objective function is smooth almost everywhere. Based on these properties, we use the MATLAB code HANSO (Hybrid Algorithm for Non-Smooth Optimization), described in \cite{Overton2014}, which is based on   BFGS with weak Wolfe condition in the line search.  The underlying algorithm only requires the evaluation of the objective function and its gradient with respect to the parameters, whenever it is differentiable.  This is the case if the dominant Floquet multiplier is isolated and simple, where  its gradient can be evaluated  as
\begin{equation}
\nabla_{\bK} \rho(\mathscr{U})^2=2\Re\left(\bar\mu_D \nabla_{\bK} \mu_D\right),
\end{equation}
where $\mu_D$ denotes the dominant Floquet multiplier, and its gradient, $\nabla_{\bK} \mu_D$, can be evaluated as described in the previous subsection.

The  accuracy of the results depends on the resolution of solving initial value problems 
\eqref{eq:nldifeq}, 
and \eqref{wm:solvtot}. To avoid fluctuations of the objective function and convergence problems, we always used the same step-size $\delta$ for these initial value problems, and kept it fixed during the optimization process. When using the time-integration schemes described in Remark~\ref{rem:consistency}, the overall method can be interpreted as stability optimization  of discretized eigenvalue problem
$\No_{\delta}(\mu;\bK)\bm{v}=0$.

\section{Numerical examples}\label{sec:examples}

\subsection{Stability and stabilization of a scalar system}\label{sec:example1}
For scalar systems of the form \eqref{eq:commensurate}, the nonlinear eigenvalue problem has an explicit expression, \eqref{eq:d1N1}. This explicit expression permits to compute the derivative of the Floquet multiplier with respect to an element $K_j$ of $\bK$ by \eqref{eq:demuth},   as follows
\begin{equation}
\pdv{\mu}{K_j}=\frac{\sum_{j=0}^h\mu^{1-j}\pdv{a_j}{K_j}}{1+\sum_{j=1}^hj\mu^{-j}a_j}.\label{eq:ded1N1}
\end{equation}
As a first example, we test the proposed methods on a scalar system with $N=1$, so that the results can be compared with the explicit expressions \eqref{eq:d1N1} and \eqref{eq:ded1N1}. We consider the following system 
\begin{equation}\label{eq:ex1_scalar}
\dot{x}(t)=(K\cos(2t))x(t)+(\sin(2t)+K)x(t-\pi)+0.1\cos(2t)\exp(\sin(2t))x(t-2\pi).
\end{equation} 
The Floquet multipliers and their derivatives with respect to controller parameter $K$ can be obtained by \eqref{eq:d1N1} and \eqref{eq:ded1N1}, with $a_0=a_2=0$ and $a_1=K\pi$,
\begin{equation}
\mu_k=\begin{cases}
\frac{K\pi}{W_k({K\pi})}, \quad &\text{if } K\neq0,\\
1, \quad &\text{if } K=0,
\end{cases}\qquad \pdv{\mu}{K}=
\begin{cases}
\frac{\pi}{1+W_k({K\pi})}, \quad &\text{if } K\neq0,\\
\pi, \quad &\text{if } K=0,
\end{cases}
\label{eq:ex1FloMu}
\end{equation}
where $W_k$ is the $k$-th branch of the Lambert $W$ function.
For $K=\nicefrac{e}{\pi}$, the largest Floquet multiplier in modulus satisfies  $\mu=e$ and its derivative $\pdv{\mu}{K}=\nicefrac{\pi}{2}$.  For this eigenvalue, the left pane of Figure~\ref{fig:conv1} depicts the relative error induced by the discretization of the monodromy operator (the first stage of Algorithm~\ref{algo:FMsmall}), as a function of the number of collocation points $M$, and illustrates the expected spectral convergence.    In the right pane, we depict the approximation error on the eigenvalue and its derivative,  induced by using a Runge-Kutta method of order $4$ for solving initial value problems \eqref{eq:nldifeq} and \eqref{wm:solvtot}, as a function of step-size $\delta$ 
(recall that the second stage of Algorithm~\ref{algo:FMsmall}  can be interpreted in terms of approximate eigenvalue problem $\No_{\delta}(\mu)\bm{v}=0$).

\begin{figure}[!h]
	\begin{subfigure}[h]{0.47\linewidth}
		\begin{tikzpicture}
		
		\begin{axis}[%
		width=0.7\textwidth,
		height=0.5\textwidth,
		at={(0\textwidth,0\textwidth)},
		scale only axis,
		% AXIS POSITION & STILE
		axis x line*={bottom, color=gray!125}, 						% x axis 	
		axis y line=left,										% y axis 
		ytick style={color=gray!125},
		xtick style={color=gray!125},
		tick align=outside,
		xmode=log,
		xmin=2,
		xmax=100,
		xtick={  1,  10, 100},
		xticklabels={\tiny{1},\tiny{10},\tiny{100}},
		extra x ticks ={2,5,15,50}, extra x tick labels={\tiny{2},\tiny{5},\tiny{15},\tiny{50}},
		xminorticks=true,
		xlabel={$M$-degree spline},
		ymode=log,
		ymin=1e-16,
		ymax=20,
		ytick={1e-16,1e-15,1e-14,1e-13,1e-12,1e-11,1e-10,1e-09,1e-08,1e-07,1e-06,1e-05,1e-04,1e-03,1e-02,1e-01,1},
		yticklabels={\tiny{$10^{-16}$},\tiny{$10^{-15}$}, , , , , \tiny{$10^{-10}$},, , , , , ,,, ,\tiny{$1$}},
		extra y ticks ={6.08408543232763e-07}, extra y tick labels={\tiny{$6\cdot10^{-7}$}},
		axis background/.style={fill=white},
		ylabel style = {font=\small, yshift=-0.5cm, rotate=-90, color=black,at={(0.04\textwidth,-0.05\textwidth)}},
		ylabel={$\frac{\lvert \mu-\mu_M \rvert}{\lvert \mu\rvert}$}, %
		xlabel style={font=\small, yshift=0.3cm, color=black},
		title style={font=\bfseries, yshift=-0.3cm,color=black},
		title={ \small{Error spline discretization}},
		]
		\addplot [color=black!50!white,line width=1.1pt,only marks,mark=o,mark size=2 pt,forget plot]
		table[row sep=crcr]{%
			15	6.08408543232763e-07\\
		};
		
		\addplot [color=black,solid,line width=1pt,mark=o,mark size=1 pt,forget plot]
		table[row sep=crcr]{%
			2	4.65402077185175\\
			3	0.219263122394391\\
			4	0.229761170633599\\
			5	0.0909429436914303\\
			6	0.00273921998623157\\
			7	0.00361190123447791\\
			8	0.00207174431738241\\
			9	0.00034792557704186\\
			10	1.68364310617957e-05\\
			11	1.48589769168743e-05\\
			12	7.51346220544696e-06\\
			13	4.7916530249881e-06\\
			14	4.97022132090138e-07\\
			15	6.08408543232763e-07\\
			16	2.13670028192679e-07\\
			17	2.35227718178089e-08\\
			18	2.82133719859768e-08\\
			19	9.44599469705878e-09\\
			20	1.31371755709072e-11\\
			21	2.57705549251978e-09\\
			22	9.43051624752587e-10\\
			23	2.46154137110853e-10\\
			24	2.38036871178527e-10\\
			25	4.41429226525452e-11\\
			26	2.65744642021872e-11\\
			27	2.42421756611519e-11\\
			28	2.38031970039817e-12\\
			29	5.58795161512827e-12\\
			30	1.84527872450222e-12\\
			31	7.26185385605343e-13\\
			32	4.70345944917386e-13\\
			33	1.63371290349908e-16\\
			34	6.74723429145122e-14\\
			35	2.33620945200369e-14\\
			36	2.94068322629835e-15\\
			37	7.18833677539597e-15\\
			38	3.26742580699817e-16\\
			39	4.90113871049725e-16\\
			40	1.30697032279927e-15\\
			41	4.90113871049725e-16\\
			42	3.26742580699817e-16\\
			43	1.47034161314918e-15\\
			44	1.30697032279927e-15\\
			45	8.16856451749542e-16\\
			46	8.16856451749542e-16\\
			47	1.30697032279927e-15\\
			48	9.8022774209945e-16\\
			49	4.90113871049725e-16\\
			50	1.63371290349908e-15\\
			51	1.30697032279927e-15\\
			52	3.9209109683978e-15\\
			53	3.26742580699817e-16\\
			54	1.63371290349908e-15\\
			55	3.26742580699817e-15\\
			56	3.10405451664826e-15\\
			57	4.90113871049725e-16\\
			58	1.79708419384899e-15\\
			59	1.47034161314918e-15\\
			60	0\\
			61	1.63371290349908e-15\\
			62	3.26742580699817e-15\\
			63	2.12382677454881e-15\\
			64	2.12382677454881e-15\\
			65	3.26742580699817e-16\\
			66	2.77731193594844e-15\\
			67	3.26742580699817e-16\\
			68	2.77731193594844e-15\\
			69	3.43079709734808e-15\\
			70	1.14359903244936e-15\\
			71	3.26742580699817e-16\\
			72	1.63371290349908e-16\\
			73	3.9209109683978e-15\\
			74	1.63371290349908e-16\\
			75	8.16856451749542e-16\\
			76	1.47034161314918e-15\\
			77	3.43079709734808e-15\\
			78	3.43079709734808e-15\\
			79	2.61394064559853e-15\\
			80	1.79708419384899e-15\\
			81	1.14359903244936e-15\\
			82	3.59416838769798e-15\\
			83	1.14359903244936e-15\\
			84	2.94068322629835e-15\\
			85	1.9604554841989e-15\\
			86	2.61394064559853e-15\\
			87	2.94068322629835e-15\\
			88	1.14359903244936e-15\\
			89	3.9209109683978e-15\\
			90	9.8022774209945e-16\\
			91	2.28719806489872e-15\\
			92	1.9604554841989e-15\\
			93	3.26742580699817e-15\\
			94	2.77731193594844e-15\\
			95	1.63371290349908e-16\\
			96	2.12382677454881e-15\\
			97	0\\
			98	6.53485161399634e-16\\
			99	4.90113871049725e-16\\
			100	2.28719806489872e-15\\
		};
		
		\end{axis}
		\end{tikzpicture}%
	\end{subfigure}\qquad 
	\begin{subfigure}[h]{0.47\linewidth}
		\begin{tikzpicture}
		
		\begin{axis}[%
		width=0.7\textwidth,
		height=0.5\textwidth,
		at={(0\textwidth,0\textwidth)},
		scale only axis,
		% AXIS POSITION & STILE
		axis x line*={bottom, color=gray!125}, 						% x axis 	
		axis y line*=left,										% y axis 
		ytick style={color=gray!125},
		xtick style={color=gray!125},
		tick align=outside,
		xmode=log,
		xmin=1e-4,
		xmax=1e-1,
		xtick={ 1e-4, 1e-3,1e-2,1e-1},
		xticklabels={\tiny{$10^{-4}$},\tiny{$10^{-3}$} ,\tiny{$0.01$} ,\tiny{$0.1$}},
		xminorticks=true,
		xlabel={Step-size $\delta$},
		ymode=log,
		ymin=1e-14,
		ymax=1e-3,
		ytick={1e-14,1e-13,1e-12,1e-11,1e-10,1e-09,1e-08,1e-07,1e-06,1e-05,1e-04,1e-03},
		yticklabels={\tiny{$10^{-14}$} ,\tiny{$10^{-13}$} ,\tiny{$10^{-12}$} , , \tiny{$10^{-10}$}, , ,\tiny{$10^{-7}$} , , ,\tiny{$10^{-4}$} ,\tiny{$10^{-3}$}},
		axis background/.style={fill=white},
		axis background/.style={fill=white},
		xlabel style = {font=\small, yshift=0.1cm, color=black},
		title style={font=\bfseries, yshift=-0.2cm,color=black},
		title={ \small{Errors for Algorithms~\ref{algo:FMsmall}-\ref{algo:deFM}}},
		]
		\addplot [color=black,solid,line width=1.2pt,mark=o,mark size=0.4 pt,forget plot]
		table[row sep=crcr]{%
			0.1	0.000131632409389647\\
			0.0909090909090909	8.77724283025066e-05\\
			0.0833333333333333	6.06318166671099e-05\\
			0.0714285714285714	3.15023780048363e-05\\
			0.0666666666666667	2.35099525555807e-05\\
			0.0588235294117647	1.38377192860151e-05\\
			0.0526315789473684	8.64937857210721e-06\\
			0.0454545454545455	4.66409934716895e-06\\
			0.04	2.72687898107557e-06\\
			0.0357142857142857	1.69683843482319e-06\\
			0.032258064516129	1.10946519386975e-06\\
			0.0285714285714286	6.69450259371784e-07\\
			0.0256410256410256	4.27222413209292e-07\\
			0.0227272727272727	2.59309684600216e-07\\
			0.02	1.53003324152433e-07\\
			0.0178571428571429	9.5968086002101e-08\\
			0.0158730158730159	5.9170304706286e-08\\
			0.0142857142857143	3.84280344854983e-08\\
			0.0126582278481013	2.34370884771591e-08\\
			0.0112359550561798	1.4412191612913e-08\\
			0.01	8.96702454509511e-09\\
			0.00892857142857143	5.65683417902108e-09\\
			0.008	3.6223795054869e-09\\
			0.00709219858156028	2.22319099042382e-09\\
			0.00632911392405063	1.40236821048716e-09\\
			0.00564971751412429	8.86092877857252e-10\\
			0.0050251256281407	5.52057239384925e-10\\
			0.00448430493273543	3.48706053603168e-10\\
			0.00398406374501992	2.16468103312661e-10\\
			0.00355871886120996	1.37374016916527e-10\\
			0.00316455696202532	8.56366164607764e-11\\
			0.00282485875706215	5.42364910842336e-11\\
			0.00251256281407035	3.38820620195386e-11\\
			0.00224215246636771	2.14299022690685e-11\\
			0.00199600798403194	1.34137631654696e-11\\
			0.00177935943060498	8.44956313689726e-12\\
			0.00158730158730159	5.36200912057434e-12\\
			0.00141442715700141	3.34568065507577e-12\\
			0.00125944584382872	2.09687051164107e-12\\
			0.00112233445566779	1.340461437321e-12\\
			0.001	8.3891157594678e-13\\
			0.00089126559714795	5.51051362350241e-13\\
			0.000794912559618442	3.9225446813013e-13\\
			0.000708215297450425	1.72193340028803e-13\\
			0.000631313131313131	9.55722048546964e-14\\
			0.000562429696287964	1.2563252227908e-13\\
			0.00050125313283208	1.51935300025415e-14\\
			0.000446827524575514	8.65867838854515e-14\\
			0.000398247710075667	3.25108867796318e-14\\
			0.00035486160397445	4.54172187172745e-14\\
			0.000316255534471853	9.81861455002949e-14\\
			0.00028184892897407	1.0684482388884e-13\\
			0.000251193167545843	2.95702035533334e-14\\
			0.000223914017017465	1.48667874218417e-13\\
			0.000199560965875075	1.46380676153518e-13\\
			0.000177841010136938	1.57163181316612e-13\\
			0.000158503724837534	1.22691839052781e-13\\
			0.000141262890238734	1.17954071632634e-13\\
			0.000125897016240715	3.61377294253997e-13\\
			0.000112208258527828	2.58126638752855e-13\\
			0.0001	1.52262042606115e-13\\
		};
		\addplot [color=black!70!white,solid,line width=1.2pt,mark=o,mark size=0.4 pt,forget plot]
		table[row sep=crcr]{%
			0.1	0.000747082142019412\\
			0.0909090909090909	0.000505677036993882\\
			0.0833333333333333	0.000353915905406502\\
			0.0714285714285714	0.000187937011634191\\
			0.0666666666666667	0.000141550949246493\\
			0.0588235294117647	8.46419513749462e-05\\
			0.0526315789473684	5.36039456592009e-05\\
			0.0454545454545455	2.93707033834738e-05\\
			0.04	1.73921131899647e-05\\
			0.0357142857142857	1.09360303509473e-05\\
			0.032258064516129	7.21295332777994e-06\\
			0.0285714285714286	4.39422066216061e-06\\
			0.0256410256410256	2.82638351732904e-06\\
			0.0227272727272727	1.72936549050703e-06\\
			0.02	1.02832229025864e-06\\
			0.0178571428571429	6.49025707071195e-07\\
			0.0158730158730159	4.02531390068632e-07\\
			0.0142857142857143	2.62683205899768e-07\\
			0.0126582278481013	1.61015141832545e-07\\
			0.0112359550561798	9.94557428888024e-08\\
			0.01	6.21235935806633e-08\\
			0.00892857142857143	3.93262177144427e-08\\
			0.008	2.52591478069511e-08\\
			0.00709219858156028	1.55489809440543e-08\\
			0.00632911392405063	9.83312442108043e-09\\
			0.00564971751412429	6.22725296267692e-09\\
			0.0050251256281407	3.88799415062258e-09\\
			0.00448430493273543	2.46038126086578e-09\\
			0.00398406374501992	1.52994603509001e-09\\
			0.00355871886120996	9.72343627838584e-10\\
			0.00316455696202532	6.0701607021459e-10\\
			0.00282485875706215	3.84906062737182e-10\\
			0.00251256281407035	2.40621966587413e-10\\
			0.00224215246636771	1.52401861202765e-10\\
			0.00199600798403194	9.55857045949735e-11\\
			0.00177935943060498	6.03003482428456e-11\\
			0.00158730158730159	3.81919592570281e-11\\
			0.00141442715700141	2.40109261172761e-11\\
			0.00125944584382872	1.50792217817973e-11\\
			0.00112233445566779	9.54053318050381e-12\\
			0.001	5.99838477125435e-12\\
			0.00089126559714795	3.81270759415262e-12\\
			0.000794912559618442	2.49906783171527e-12\\
			0.000708215297450425	1.44439589934197e-12\\
			0.000631313131313131	8.69634328905047e-13\\
			0.000562429696287964	6.84596725436792e-13\\
			0.00050125313283208	3.31343118815577e-13\\
			0.000446827524575514	3.15228308429495e-13\\
			0.000398247710075667	3.88734461067763e-14\\
			0.00035486160397445	2.74234492535077e-14\\
			0.000316255534471853	1.14075894575158e-13\\
			0.00028184892897407	2.04686363500408e-13\\
			0.000251193167545843	6.00771439831998e-14\\
			0.000223914017017465	3.1395608655691e-13\\
			0.000199560965875075	2.50062276955954e-13\\
			0.000177841010136938	3.4349990559806e-13\\
			0.000158503724837534	1.62561683719246e-13\\
			0.000141262890238734	2.91621524793744e-13\\
			0.000125897016240715	6.5448747445227e-13\\
			0.000112208258527828	4.18702354066442e-13\\
			0.0001	3.63431381601898e-13\\
		};
		
		\pgfplotsset{
			after end axis/.code={  
				\node[right,align=right] at (axis cs: 	0.01,	0.000000001) {\textcolor{black}{\small{$\frac{\lvert \mu-\mu_\delta \rvert}{\lvert \mu\rvert}$}}};  
				\node[left,align=left] at (axis cs: 	0.01,	0.000002) {\textcolor{black!70!white}{\small{$\frac{\left\lvert \nicefrac{\partial \mu}{\partial K}-\nicefrac{\partial \mu_\delta}{\partial K}\right\rvert}{\left\lvert \nicefrac{\partial \mu}{\partial K}\right\rvert}$}}};  
			}}
			\end{axis}
			\end{tikzpicture}%
	\end{subfigure}
	\caption{Approximation errors on the dominant Floquet multiplier of system \eqref{eq:ex1_scalar} with $K=\nicefrac{e}{\pi}$. Error induced by approximating $\mathscr{U}$ by $\mathscr{U}_M$ (left pane). Error induced by approximating $\No(\mu)$ by $\No_{\delta}(\mu)$ (right pane).}
		\label{fig:conv1}
\end{figure}
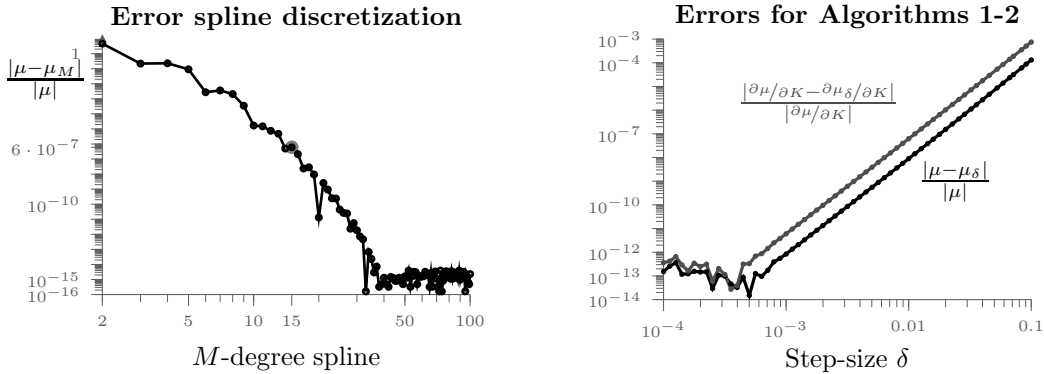

System \eqref{eq:ex1_scalar} is unstable for $K=\nicefrac{e}{\pi}$, since the dominant Floquet multiplier $\mu=e$ does not lies within the unit circle. 
Selecting $M=15$ collocation points in the first stage of Algorithm~\ref{algo:FMsmall}, followed by applying Broyden's method to \eqref{wm:Ndelta} with a Runge-Kutta method of order $4$ and step-size $\delta=2\cdot10^{-4}$  to compute dominant Floquet multipliers, an application of the stability optimization approach of section~\ref{sec:stabopt} yields the stabilizing controller parameter $K= -0.1295$, for which the the spectral radius is $\rho(\mathscr{U}) =0.3935$. Figure~\ref{fig:optimization} displays the iterates generated by the 
optimization routine. The final controller parameter found is very close to the minimizer  $K=-(e\pi)^{-1}$, for which the dominant Floquet multiplier $\mu=e^{-1}$ corresponds to a double  non-semi-simple eigenvalue.
Finally, Figure~\ref{fig:spectra} shows the eigenvalues of \eqref{eq:Mfineig} in the first and last iteration of the stability optimization procedure.

\begin{figure}[!h]
	{\centering
		% [inline block 0: 6 envs, 117705 chars -> data_tex | \begin{tikzpicture} 		...]
x(t-\tau).\label{eq:Mathieu}
\end{equation}
If $k_i=0$, Mathieu's equation with PD feedback controller can be rewritten as a linear periodic time-delay system of dimension $d=2$, which presents the same spectrum of system  \eqref{eq:Mathieu}, except for a non-physical Floquet multiplier of \eqref{eq:Mathieu} equal to one.

For $\nu=4$ and $\epsilon=2$ the system without controller is unstable. 
Therefore, we design PI, PD, and PID controllers in the presence of input delay $\tau=\nicefrac{3\pi}{4}$. Table~\ref{tab:Mathieu} summarizes the results of the stability optimization, with $M=10$ collocation points and a Runge-Kutta method of order $4$ with step-size $\delta=0.001$.  We used a direct method to solve discretized eigenvalue problem \eqref{eq:Mfineig} and obtained left eigenvector by explicitly construct matrix $\No_{\delta}(\mu)$. 
As expected, increasing the number of controller parameters result in smaller spectral radii.

\begin{table}[!h]
	\caption{Results of the stability optimization algorithm for Mathieu's equation with delayed feedback controller \eqref{eq:Mathieu}, where $\nu=4$, $\epsilon=2$, $\tau=\nicefrac{3\pi}{4}$. All the designed controllers stabilize the system, since the spectral radii $\rho(\mathscr{U})$ are smaller than $1$.}
	\label{tab:Mathieu}
	\begin{center}
		\begin{tabular}{lcccc}
			\toprule
			& $\rho(\mathscr{U})$ 	& $k_i$ & $k_p$ & $k_d$ \\
			\midrule
			PI controller 	&0.5339 & 0.3215 & 0.7541 &        \\ 
			PD controller  	&0.2858 & 		  & 0.7012	& 0.0231\\ 
			PID controller  &0.1592& 1.4131  & 0.9666 & 0.3787 \\ 
			\bottomrule
		\end{tabular}
	\end{center}
\end{table}

\subsection{A PDE model for a milling problem in machining}\label{sec:example3}
The last numerical example consider a  milling model, described in \cite{Rott2010}. Some Floquet multipliers of this system are computed by deflated Broyden's method in \cite{Jarlebring2018}.
This milling model describes the interaction between a rotating cutter and a visco-elastic workpiece by a periodic cutting-force $f$. The rotating cutter is attached to a spring, and  modeled by
\begin{equation*}
\ddot{q}(t)+2K\dot{q}(t)+q(t)=-f(t),
\end{equation*}
where $K$ is the damping parameter to be designed in order to improve the stability properties of the overall system. 
The workpiece is modeled by Kelvin-Voigt material leading to the following partial differential equation (PDE),
\begin{align*}
&\frac{\partial u(t,x)}{\partial t^2}- \frac{ \partial u(t,x)}{\partial x^2}-\frac{\partial u(t,x)}{\partial x^2\partial t}=0,\quad x\in[0,\,1],\\
&u(t,0)\equiv0,\  \frac{ \partial u(t,1)}{\partial x}+
\frac{\partial u(t,1)}{\partial t\partial x}=-f(t). 
\end{align*}
This PDE is discretized in space using $n$ linear finite elements defined on an equidistant grid. By applying Galerkin method, we get a system of ordinary differential equations,
\begin{equation*}
P_n\ddot{U}(t)+D_nU(t)+D_n\dot{U}(t)=-f(t)e_n,
\end{equation*}
where $U(t)\in\R^n$ and 
\[
P_n=\frac{1}{6n}
\begin{pmatrix}
4 & 1  &         &          \\
1 & \ddots  & \ddots     &           \\
&\ddots&   4 &  1 \\
&      &  1    &  2
\end{pmatrix}, \ 
D_n=n
\begin{pmatrix}
2 &  -1 &         &          \\
-1 & \ddots  & \ddots     &           \\
&\ddots&  2 &  -1 \\
&      & -1  &  1  
\end{pmatrix}, \
e_n=\begin{pmatrix} 0\\ \vdots \\0\\ 1 \end{pmatrix}.
\]
The periodic cutting-force couples the rotating cutter and workpiece dynamics, and is influenced by the previous cut, occurring with  delay $\tau=1$; it is modeled by 
\begin{equation*}
f(t)=w(t)\left(q(t)-q(t-\tau)\right)+w(t)\left(u(t,L)-u(t-\tau,L)\right),
\end{equation*}
where $u(t,L)$ is discretized by $e_n^\transpose U(t)$, and $w(t)$ is a function 
with period $1$, defined on the interval $[0,\, 1]$ as
\begin{equation}\label{eq:wperiodic}
w(t)=\begin{cases}
\sin^2(2\pi t)+0.5 \sin(4\pi t),  & t\in [0,\, \nicefrac{1}{2}], \\
0, & t\in (\nicefrac{1}{2},\, 1].
\end{cases}
\end{equation}
Setting $y(t)=\left(U^\transpose\,q\,\dot{U}^\transpose\,\dot{q}\right)^\transpose$, we can rewrite the milling model as a periodic linear time-delay system, whose dimension $d=2(n+1)$ depends on the PDE discretization,
\begin{equation}\label{eq:milling}
E\dot{y}(t)=(A(K)-Fw(t))y(t)+Fw(t)y(t-\tau),
\end{equation}
where 
\[
E=\begin{pmatrix} 
I_n &  & &\\ 
& 1 &  &\\ 
& &  P_n  &\\ 
& &  &1
\end{pmatrix},\, 
A(K)=-\begin{pmatrix} 
&  & -I_n & \\ 
&  &  &-1\\ 
 D_n &  &  D_n &\\ 
& 1 &  &2K
\end{pmatrix},\, 
F=\begin{pmatrix} 
&  &  &\\ 
&  & &\\ 
e_ne_n^\transpose & e_n & &\\ 
e_n^\transpose  &1&  &
\end{pmatrix}.
\]

System \eqref{eq:milling} can be reformulated into \eqref{eq:system} by pre-multiplication with matrix $E^{-1}$. However, as this reduces the sparsity of the matrices, we rely instead on a slight extension of the presented results to models with a constant non-singular matrix leading coefficient matrix $E$, as described in Appendix~\ref{appendix:mass}.

Let us consider a PDE discretization with $n=250$, leading to a system dimension $d=502$.  
The largest Floquet multipliers in modulus are iteratively approximated by Arnoldi method, and their accuracy is then refined by Broyden's method, as explained in section~\ref{sec:FMcomp}. To compute a left eigenvector, appearing in the expression for the derivative of the Floquet multiplier with respect to $K$, we separately apply to the 
Arnoldi and Broyden's methods to the model \eqref{eq:milling} and corresponding model \eqref{wm:system:dual},   followed by a pairing of Floquet multiplier approximations (see section~\ref{sec:deNLeig}). For the stiffness of the problem and the consistency (see Remark~\ref{rem:consistency}),  the initial value problems \eqref{eq:nldifeq}, \eqref{wm:solvtot} and \eqref{eq:adjoint} are solved by trapezoidal rule with step-size $\delta=0.01$.

The milling model \eqref{eq:milling}  is periodic with $T=\tau_1=1$. However, differently from the previous examples, the periodic system matrices are not smooth, presenting a discontinuity at $t=\nicefrac{1}{2}+kT$ with $k\in\Z$, due to 
\eqref{eq:wperiodic}. As a consequence, the solutions exhibit a discontinuity in the derivative at these time-instants. The presence of such a  discontinuity decreases the accuracy of the discretization by the collocation approach if it occurs in an interior point of the domain of one of the  polynomials \eqref{wm:parsol}, defining the spline approximation.   This can be overcome by taking $\Delta=\nicefrac{1}{2}\;n$,\, $n\in\N$ so the  discontinuity is located at the endpoints of the support of two $M$-degree polynomials. 

For $n_1=N=2$, the  system \eqref{eq:milling} requires a high number of collocation points $M=250$,  to 
reliably approximate the 
dominant Floquet multipliers by Arnoldi method. Therefore, we increase the sparsity of the matrices $S_q$ and $S_\phi$ used in the Arnoldi iteration \eqref{eq:Arnoldi} by setting $M=20$ and $n_1=N=26$, which is 
beneficial in the employed sparse LU factorization of $S_q$. 
Indeed, the spline approximation of the eigenfunction is achieved by 
$26$ polynomials of degree $M=20$, instead of $2$ polynomials of degree $250$. The resulting $S_q$ and $S_\phi$  matrices in \eqref{eq:Arnoldi} present a block diagonal structure where each of the $26$ blocks has dimension $d\times 20$.

\begin{figure}[h]
	\begin{subfigure}[h]{0.47\linewidth}
		{\centering
			% [inline block 1: 2 envs, 108542 chars -> data_tex | \begin{tikzpicture} 			...]
\par}
	\end{subfigure}
	\caption{Modulus of the dominant Floquet multipliers as a function of $K$, and the three dominant Floquet multipliers corresponding to the minimum spectral radius.  The initial and final iterations of the optimization process are indicated by blue dots (left pane). Convergence history for Arnoldi method applied to $\mathscr{U}_M$ for $M=20$, $N=26$, and $K=0.5968$, close to the minimum of the spectral radius (right pane).  } 
	\label{fig:milling}
\end{figure}

The stability optimization almost halves the spectral radius, 
  from $\rho(\mathscr{U})=0.9095$ at initial gain $K=0$ to $\rho(\mathscr{U})=0.4799$ for $K=0.5968$. The designed controller is close to a minimizer of the spectral radius, as illustrated in the left pane of Figure~\ref{fig:milling}. The minimum is characterized by a real and  a conjugate pair of Floquet multipliers 
  with the same modulus. The approximations of the largest Floquet multipliers requires few Arnoldi iteration to converge, for example  for the final controller gain the residual norm of the approximated conjugate pair of Floquet multipliers reaches machine precision within $35$ iterations, compared to $55$  for the dominant real Floquet multiplier, right pane of Figure~\ref{fig:milling}.

\section{Concluding remarks}\label{sec:conclusion}

Central in the paper is the the dual interpretation of Floquet multipliers as solutions of either an operator eigenvalue problem or a finite-dimensional nonlinear eigenvalue problem.
We demonstrated that the related one-to-one mapping can not only be expressed in terms of right eigenpairs (Theorem~\ref{th:eigenpair}), but also in terms of left eigenpairs (Theorem~\ref{th:dual}), where surprisingly the time-shifts in \eqref{wm:system:dual} depend on the coefficient matrices.
The dual interpretation can be exploited from a computational point of view, as the nonlinear eigenvalue problem formulation lays at the basis of local corrections to improve the accuracy of (multiple) Floquet multiplier estimates obtained by discretizing the monodromy operator. This results in a robust method for accurately computing dominant Floquet multipliers. In addition, computationally tractable expressions for derivatives can be obtained from the nonlinear eigenvalue problem formulation that are useful in the context of stability optimization.  It should be noted that except for Theorem~\ref{th:dual}, the derived results can be directly generalized to the computation of eigenvalues of the solution operator if the integration time is different from $T$ or if the matrices are not periodically varying.

As we have seen, the eigenvalue problem for the discretized monodromy operator $\mathscr{U}_M$ can also be obtained from a spectral discretization of boundary value problem~\eqref{eq:difeq}. The dependence of matrix  $A(s,\mu)$  on $\mu$ in the ordinary differential equation is in terms of negative integer powers, suggesting a smoothing effect of increasing $|\mu|$ on its solutions. Further research includes clarifying whether this implies that outside a given circle in the complex plane that contains no Floquet multipliers, the number of Floquet multipliers and the number of eigenvalues of $\mathscr{U}_M$ match for sufficiently large $M$, a property which has  been observed in our numerical experiments. 

Finally, Assumption~\ref{as1} describes the most general situation where Floquet multipliers can be related to a boundary value problem in terms of an \emph{ordinary  differential equation}.  If the assumption is not satisfied or if the minimum $N$ is prohibitively large, 
stability analysis and stabilization can still be performed on (parametrized) matrix $\mathscr{U}_M$, obtained by the spectral discretization, resulting in ``discretize-first" approach.

\subsection*{Funding} This work was supported by the project C14/17/072 of the KU Leuven Research Council, by the project G0A5317N of the Research Foundation-Flanders (FWO - Vlaanderen), and by the project UCoCoS, funded by the European Unions Horizon 2020 research and innovation programme under the Marie Sklodowska-Curie Grant Agreement No 675080.

% BIBLIOGRAPHY
%\printbibliography
\bibliography{Arxiv_PTDS}

\appendix
\section{Proof of Lemma~\ref{th:operator}} \label{ap1}
$\Rightarrow$\ \ Let $(\mu,\phi)$ be an 
eigenpair of $\mathscr{U}$. Consider the solution of initial value problem \eqref{eq:ivp}, initialized with $\phi$ at $t_0=0$, in the interval $[-\tau_h,\,T]$, \ie $x(t;0,\phi)$ for  $t\in[-\tau_h,\,T]$. By assumption we have $x(T+\theta;0,\phi)=\mu\; \phi(\theta)$ for every $\theta\in[-\tau_h,\,0]$. 
We divide the interval $[-\tau_h,\,T]$ into $\Delta$-length subintervals and define for $n=-n_h,\ldots, N$
\begin{equation*}
q_n(s)=x\left((s+n-1)\Delta;0,\phi\right), \quad s\in[0,\,1].
\end{equation*}
We define $\bm{q}(s)=\big(q_1^\transpose(s)\, \cdots\,  q_N^\transpose(s)\big)^\transpose$.
Since the solution is initialized on an eigenfunction, i.e. $\phi=\frac{1}{\mu}x_T(\cdot;0,\phi)$, functions $\{q_n(s)\}_{n=-n_h+1}^0$ are related to $\{q_n(s)\}_{n=1}^N$ 
\begin{equation}
q_n(s)=\mu^{-1}q_{N+n}(s)=\mu^{\left\lfloor \frac{n-1}{N}\right\rfloor} q_{(n-1)\bmod_N+1}(s), \ \  n=-n_h+1,\ldots,0.\label{eq:nshift}
\end{equation}
Furthermore, as illustrated in Figure~\ref{fig:th}, we have
\begin{equation*}
q_n(s-\tau_j)=x\left((s+n-n_j-1)\Delta;0,\phi\right)
=q_{n-n_j}(s)
=\mu^{a_{n-n_j}}q_{b_{n-n_j}}(s), \ n\geq 1.
\end{equation*}

From the chain rule, it follows that for $n=1,\ldots,N$,
\begin{align*}
\begin{array}{rcl}
\dot{q}_n(s)&=&\Delta\; \dot{x}((s+n-1)\Delta;0,\phi) \\[2pt]
&=&\Delta  \sum_{j=0}^hA_j( (s+n-1)\Delta  )x( (s+n-n_j-1)\Delta;0,\phi)\\[2pt]
&=&\Delta  \sum_{j=0}^hA_j( (s+n-1)\Delta  )q_{n-n_j}(s) \\[2pt]
&=&\Delta  \sum_{j=0}^hA_j( (s+n-1)\Delta  ) \mu^{a_{n-n_j}}q_{b_{n-n_j}}(s),
\end{array}
\end{align*}	
hence, differential equation in \eqref{eq:difeq} is satisfied. The boundary condition is also satisfied, following from the continuity of the solution and $x(0;0,\phi)=\mu^{-1}x(T;0,\phi)$.

Finally we prove by contradiction that $\bm{q}(0)\neq0$. If $\bm{q}(0)=0$ then the solution of the differential equation in \eqref{eq:difeq} would be $\bm{q}\equiv0$, which implies that $x(t;0,\phi)\equiv0$ by the monodromy operator action \eqref{eq:nshift}.
However, this contradicts the property that an eigenfunction is not identically zero.

\smallskip

\noindent $\Leftarrow$
We construct function $x(t)$, $t\in[-\tau_h,\, T]$ from $\bm{q}$ in the following way,
\begin{equation}\label{wm:prop2}
\begin{array}{l}
x(t) =  \mu^{a_n}q_{b_n}\left(\nicefrac{t}{\Delta}-(n-1) \right),\ t\in ((n-1)\Delta,\, n\Delta],\ n=-n_h+1,\ldots,N,
\\
x(-\tau_h)=  \mu^{a_{-n_h+1}}q_{b_{-n_h+1}}\left(0\right)
\end{array}
\end{equation}
and show that it is 
a solution initialized at an eigenfunction of $\mathscr{U}$.
Note that \eqref{wm:prop2}  implies
\begin{equation}\label{wm:prop}
x(t)=q_n\left(\nicefrac{t}{\Delta}-(n-1)
\right),\ t\in ((n-1)\Delta,\, n\Delta],\ n=1,\ldots,N
\end{equation}
and, for any $j\in\{1,\ldots,h\}$,
\[
x(t-\tau_j) =  \mu^{a_{n-n_j}}q_{b_{n-n_j}}\left(\nicefrac{t}{\Delta}-(n-1)\right),\ t\in ((n-1)\Delta,\, n\Delta],\, n=-n_h+1+n_j,\ldots,N.
\]
By construction of $x$ and  condition $\bm{q}(1)=B(\mu)\bm{q}(0)$,  $x$ is continuous on $[-\tau_h,\, T]$  and
$
x(t)=\mu^{-1}\;  x(t+T)$, for any  $t\in[-\tau_h,\, 0]$.
It remains to show that $x$ is a solution of \eqref{eq:system}.  In the interval $t\in[(n-1)\Delta,\, n\Delta]$, for any $n= 1,\ldots, N$, we get from \eqref{wm:prop} that
\[
\begin{array}{lcl}
\dot x(t)&=& \nicefrac{1}{\Delta}\; \dot q_n\left(\nicefrac{t}{\Delta}-(n-1)  \right)\\
&=& \sum_{j=1}^h  A_j(t)  \mu^{a_{n-n_j}} q_{b_{n-n_j}} \left(\nicefrac{t}{\Delta}-(n-1)\right) \\
&=& \sum_{j=1}^h  A_j(t) x(t-\tau_j),
\end{array}
\]
which completes the proof.

\section{Arbitrary non-singular coefficient matrix}\label{appendix:mass}
We briefly discuss the generalization to linear time-periodic system of the form
\begin{equation}\label{eq:systemmass}
\textstyle E\dot{x}(t)=\sum\nolimits_{j=0}^hA_j(t,\bK)x(t), 
\end{equation}
where $E\in\R^{d\times d}$ is non-singular, without (explicitly) using its inverse.

The characteristic matrix $\No(\mu;\bK)$ associated  to \eqref{eq:systemmass} is defined by its action on 
$\bm{v}\in\C^{Nd}$ as $\No(\mu;\bK)\bm{v}=\bm{q}(1)-B(\mu)\bm{v}$, with $\bm{q}$ solution of the initial value problem
$$
\begin{cases}
(E\otimes I_N)\, \dot{\bm{q}}(s)=A(s,\mu;\bK)\bm{q}(s), \qquad s\in[0,\,1],\\
\bm{q}(0)=\bm{v}.
\end{cases}
$$
Its ``transposed problem" \eqref{wm:dual}
changes to $ \Mo(\mu;\bK)\bm{v}=(E^\transpose\otimes I_N)\bm{r}(0)-B(\mu)^\transpose\bm{v}$,  with 
$$
\begin{array}{c}
\begin{cases}
(E^\transpose\otimes I_N)\, \dot{\bm{r}}(s)=-A(s,\mu;\bK)^\transpose\bm{r}(s), \qquad s\in[0,\,1],\\
(E^\transpose\otimes I_N)\, \bm{r}(1)=\bm{v}.
\end{cases}
\end{array}
$$
Moreover, the eigenvalues of $ \Mo(\mu;\bK)\bm{v}=0$, are associated to the Floquet multipliers of $ E^\transpose \dot{x}(t)=\sum_{j=0}^hA_j^\transpose (-t+\tau_j,\bK)x(t-\tau_j) $, in the sense of Theorem~\ref{th:dual}  where \eqref{wm:defphat} changes to $\hat{\bm{p}}(s)=R(E^\transpose\otimes I_N) \bm{r}(1-s)$.

\end{document}